\theoremstyle{thmstyleone}%
\theoremstyle{thmstyletwo}%
\theoremstyle{thmstylethree}%
\newtheorem{thm}{\bf Theorem}[section]
\newtheorem{cor}[thm]{\bf Corollary}
\newtheorem{lem}[thm]{\bf Lemma}
\newtheorem{rem}[thm]{\bf Remark}
\newtheorem{exam}[thm]{\bf Example}
\def\zR{\mathbb R}
\begin{document}

\title[Calculating  eigenvectors in max-times algebra]{Calculating eigenvectors in max-times algebra by mutation-sunflower method}


\author*[1]{\fnm{Seyed Mahmoud } \sur{Manjegani}}\email{manjgani@iut.ac.ir, manjeg1s@uregina.ca}

\author[2]{\fnm{Aljo\v sa } \sur{Peperko}}\email{aljosa.peperko@fs.uni-lj.si}
\equalcont{These authors contributed equally to this work.}

\author[3]{\fnm{Hojr} \sur{Shokooh Saljooghi}}\email{h.shokooh@math.iut.ac.ir}
\equalcont{These authors contributed equally to this work.}

\affil*[1, 3]{\orgdiv{Department of Mathematical Sciences}, \orgname{Isfahan University of Technology}, \orgaddress{ \city{Isfahan}, \postcode{845683111}, \country{Iran}}}

\affil[1]{\orgdiv{Department of Mathematics and Statistics}, \orgname{University of Regina}, \orgaddress{\street{3737 Wascana Parkway}, \city{Regina, Saskatchewan}, \postcode{S4S 0A2}, \country{canada}}}

\affil[2]{\orgdiv{Faculty of Mechanical Engineering}, \orgname{University of Ljubljana}, \orgaddress{\street{A\v{s}ker\v{c}eva 6}, \city{Ljubljana}, \postcode{SI-1000}, \country{Slovenia}}}

\affil[2]{\orgdiv{Institute of Mathematics, Physics and Mechanics}, \orgname{University of Ljubljana}, \orgaddress{\street{Jadranska 19}, \city{Ljubljana}, \postcode{SI-1000}, \country{Slovenia}}}

\abstract{In this article we introduce a new method, which we call a mutation-sunflower method,
for calculating max-eigenvectors of a nonnegative irreducible $n\times n$ matrix $A$.
Our method works in the general irreducible case; however, its practical usefulness is limited to
some special classes of matrices.
Our method reduces to solving max-eigenproblems for simple mutation-sunflower matrices that have exactly one positive entry in each row.
We include some instructive examples.}

\keywords{mutation-sunflower method, max-times algebra, circuit geometric mean, max-eigenvectors, critical circuits}


\pacs[MSC Classification]{15A80, 15A18}
\maketitle

\section{Introduction}
The max-times system is one of the analogues of linear algebra that has recently attracted the attention of many researchers.
The {\it max-times system} consists of nonnegative real numbers $\mathbb{R}_+$ equipped with the following operations: multiplication $a\otimes b=ab$, and max-times algebra summation $a\oplus b=\max\{a,b\}$. Furthermore, max-times algebra is isomorphic to 
{\it max-plus algebra}, which consists of the set $\mathbb{R}\cup\{-\infty\}$ with operations of addition and maximization \cite{Bapat-Stan}, 
and it is also isomorphic to the {\it min-plus algebra}, consisting of the set $\mathbb{R}\cup\{\infty\}$ with operations of addition and minimization
\cite{Bu, litvinor}. This algebra system and its isomorphic versions enable us to study in a linear manner classically non-linear phenomena from different areas such
as parallel computation, transportation networks, timetabled programs, IT, dynamic systems, combinatorial optimization, computational biology,
graph theory, and mathematical physics. For more details, we refer to references
\cite{Bu, Esn-Dri1, Esn-Dri2, Heid, litvinor, KMNS21}. 

In this algebraic system, many problems that appear in linear algebra, like systems of equations, eigenvalues,
projections, subspaces, singular value decomposition, and duality theory, have been developed and have reached other areas such as functional analysis
and combinatorial optimization \cite{Bapat, Bu, litvinor, MPS22}. It is well-known that eigenvectors and eigenvalues for linear systems in max-plus algebra 
play an important role in the study of associated discrete-event systems (see e.g. \cite{Bu, Heid}) and their (ultimately) periodic behaviour.

The article is organized as follows. In Section 2 we present some preliminaries needed for our proofs.
In Section 3 we present our main results by introducing a new mutation-sunflower method for calculating a basis of the principal max-times algebraic eigencone of an
irreducible matrix $A$. Our method works in the general irreducible case, but it is most effective for some special classes of irreducible matrices — for example, matrices of small dimensions, matrices with prescribed (priorly known) circuits, matrices that have a critical circuit of length $n$, and sparse matrices with a small number of circuits.
The method reduces the calculation to solving max-eigenproblems for simple mutation-sunflower matrices that have exactly one positive
entry in each row. We illustrate the method  by several examples.

\section{Preliminaries}

For a $n\times n$ matrix $A$ the conventional eigenequation for eigenvalue $\lambda$ and corresponding eigenvector $x$ is $Ax=\lambda x$,
 $x \neq 0$. In the max-times system the eigenequation for a nonnegative matrix $A=[a_{ij}]=[a_{i,j}]$ is $ A\otimes x=\lambda
x$, where $(A\otimes
x)_{i}=\max_{j=1, \ldots , n}(a_{ij}x_{j}),~x=(x_{1},x_{2},\dots,x_{n}) \in  \mathbb{R}^n _+$, $x\neq 0$ and
$\lambda $ is a (geometric) max-eigenvalue corresponding to max-eigenvector $x$. 
For $A=[a_{ij}], B=[b_{ij}] \in  M_{n}(\zR_{+})=\mathbb{R}_+ ^{n\times n}$ we denote $A\ge B$ if $a_{ij} \ge b_{ij}$ for all $i,j=1, \ldots , n$.
 Similarly, for $x,y \in \mathbb{R}^n _+ $ we write $x\ge y$ if $x_i \ge y_i$ for all $i=1, \ldots , n$. The $ij$th coordinate of the matrix product $A\otimes B$ in max-times algebra is defined with 
 $[A\otimes B]_{ij}= \max_{k=1,\ldots , n}a_{ik} b_{kj}$ for $i,j\in \{1,\ldots , n\}$. The $k$th power of $A$ in max-times algebra is denoted  by $A^k _{\otimes}$.

We call a matrix
$A\in M_{n}(\zR_{+}) $  
reducible
 if either $n=1$
and $A=0$ or if $ n\ge 2 $, there exist a permutation matrix $
P\in M_{n} $ and an integer  $r$ with $ 1\le r
\le n-1 $ such that
\[
P^TAP=\begin{pmatrix}
B & C \\ 0 & D
\end{pmatrix},
\]
where $ B \in M_{r}(\zR_{+}) $, $ D \in M_{n-r}(\zR_{+}) $, $ C\in M_{r\times {(n-r)}}(\zR_{+}) $, and $ 0\in M_{(n-r)\times r}(\zR) $ is a zero
matrix.
 Matrix $ A\in M_{n}(\zR_{+})$ is said to be irreducible if it is not
reducible. 

Let
 $A=[a_{ij}] \in M_{n}(\zR_{+})$.  The weighted directed graph associated
 with $A$ is denoted by $D(A)=(V,E)$, where  $A$ has vertex set $V=\{1,2,\dots,n \}$ and  edge $(i,j)$ from $i$ to $j$
 with weight $a_{ij}$ if and only if $a_{ij}>0$. A circuit (or cycle) of length $k$ is a sequence of $k$ edges
  $(i_{1},i_{2}),\dots,(i_{k},i_{1})$, where $i_{1},i_{2},\dots,i_{k}$ are distinct (a circuit $(i,i)$ of length one is called a {\it loop} 
 \cite{Hor-Joh1}). The $k^{th}$ positive root
 of {\it circuit product} $a_{i_1,i_2}\dots a_{i_k,i_1} $ is called a {\it circuit geometric mean} of matrix $A$.
 The maximum circuit geometric mean in $D(A)$ is denoted by $\mu(A)$.  It is known that $\mu (A)$ is the largest max-eigenvalue of $A$. This concept has been recently addressed in the literature in the context of the symmetrization of the max-times algebra \cite{Briec}. In that work, the connection with the max-times permanent is explicitly established, and the relationship between matrix circuits and the permanent is derived (see Equation (6.6), p. 204). 
 A circuit with circuit geometric mean equal to $\mu(A)$
  is called a {\it critical circuit}, and vertices on critical circuits are called {\it critical vertices}.
  Assuming that simultaneous row and column permutations have been performed on
   $A$ so that the critical vertices are in the leading rows and columns, the {\it critical matrix}
    of A, denoted by $  A^{c}=[a_{ij}^c]$, is formed from the principal submatrix of $A$ on the rows and columns corresponding to critical vertices by
 setting 
\begin{equation*}
{a_{ij}}^{c} = \left\{
\begin{array}{rl}
a_{ij}  &  ~~\text{  if $(i,j)$ is in a critical circuit},~~~  \\
0  & ~~\text{  otherwise.  }
\end{array} \right.
\end{equation*}
Thus the
critical graph $C(A)=D(A^{c}) $ has vertex set $N_c(A)=V^{c}$ of all critical
vertices \cite{Esn-Dri1}. 

A directed graph $ D(A) $ is {\it strongly connected} if
between every pair of distinct nodes $ p_{i},p_{j} $ in $ D(A) $
there is a directed path of finite length that begins at $ p_{i} $
and ends $ p_{j} $ (see e.g. \cite{Hor-Joh1}).  A strongly connected subgraph is called maximal if it can not be included as a subgraph in a larger strongly connected subgraph.
A maximal strongly connected subgraph of $C(A)$ is called a strongly connected component of $C(A)$ (\cite{BHSST}).
\begin{thm}\cite{Hor-Joh1}\label{thm}
Let $ A \in M_{n} , n\ge 2$. The following are equivalent:
\begin{enumerate}
\item A is irreducible.
\item $ D(A)$is strongly connected.
\end{enumerate}
\end{thm}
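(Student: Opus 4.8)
\emph{Proof proposal.} The plan is to establish the equivalences by separating the combinatorial equivalence $(2)\Leftrightarrow(3)$ from the algebraic equivalence $(1)\Leftrightarrow(2)$, since the former is essentially a matter of unwinding definitions while the latter carries the real content. I would arrange the argument as $(3)\Leftrightarrow(2)$ and $(1)\Leftrightarrow(2)$, which closes the cycle.

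First I would dispose of $(2)\Leftrightarrow(3)$. The SC property asks for a path with \emph{distinct} intermediate vertices between any ordered pair $p,q$, whereas strong connectivity of $D(A)$ asks merely for a directed walk of finite length from $p$ to $q$. A simple path is in particular such a walk, giving $(3)\Rightarrow(2)$ at once. For $(2)\Rightarrow(3)$, given any directed walk from $p$ to $q$ I would repeatedly excise any closed subwalk bounded by a repeated vertex; since $V$ is finite this terminates in a walk through distinct vertices, which has length at most $n-1$ and hence realizes the SC property with $m\le n$.

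The heart of the argument is $(1)\Leftrightarrow(2)$, which I would prove in contrapositive form in both directions. For ``$A$ reducible $\Rightarrow$ $D(A)$ not strongly connected'', I take the permutation $P$ with $P^TAP=\begin{pmatrix}B&C\\0&D\end{pmatrix}$, note that conjugation by a permutation matrix only relabels the vertices of $D(A)$, and read off that the zero block in the $(2,1)$ position says there are no edges from any vertex indexed among the last $n-r$ positions to any vertex indexed among the first $r$ positions. Hence no vertex of the second block can reach a vertex of the first block, so $D(A)$ is not strongly connected. Conversely, assume $D(A)$ is not strongly connected, so there exist vertices $i,j$ with no directed path from $i$ to $j$. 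Let $R$ be the set of all vertices reachable from $i$ (with $i\in R$). Then $R\neq\emptyset$, while $j\notin R$ forces $R\neq V$, and $R$ is forward-closed: any edge leaving a vertex of $R$ lands in $R$. Ordering the vertices so that $V\setminus R$ comes first and $R$ comes last, and letting $P$ be the corresponding permutation matrix, the absence of edges from $R$ to $V\setminus R$ places a zero block in the lower-left corner of $P^TAP$, with $1\le r=|V\setminus R|\le n-1$, exhibiting $A$ as reducible.

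The step I expect to demand the most care is the bookkeeping in this last direction: verifying that $R$ is genuinely forward-closed, so that the lower-left block really vanishes, and then matching the \emph{direction} of reachability to the \emph{location} of the zero block demanded by the definition of reducibility, rather than placing it in the upper-right corner. This orientation is the only point where a subtle error could creep in; once it is fixed, the remaining checks that $P^TAP$ has exactly the stated block shape and that the index bounds on $r$ hold are routine.
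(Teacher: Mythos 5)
Your argument is correct and complete, including the careful matching of the direction of reachability to the location of the zero block. The paper itself supplies no proof of this theorem --- it is quoted directly from Horn and Johnson --- and your route (reducing walks to simple paths for $(2)\Leftrightarrow(3)$, and the reachable-set / permutation-relabelling argument with the forward-closed set $R$ for $(1)\Leftrightarrow(2)$) is precisely the standard argument given in that reference, so there is nothing to compare beyond noting that you have reconstructed it faithfully.
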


\noindent 
An important task in max-times algebra is to calculate $\mu(A)$ and several methods may be used to calculate it \cite{Bu, Esn-Dri1, Esn-Dri2}. 
The (geometric) spectrum (the set of (geometric) max-eigenvalues)  of a nonnegative $n\times n$ matrix $A$ in  max-times algebra is denoted by $\sigma_{\otimes}(A)$. 

If $A$ is irreducible, then the max-spectrum of $A$ has a unique element $\mu(A)$
and all corresponding max-eigenvectors are strictly positive (\cite{Bapat, Bapat-Stan, Bu}). More details are found in the following result.

\begin{thm}\label{thm1}  (\cite{Bapat, Bapat-Stan, Bu})
Let $ A \in M_{n} $ be a nonnegative irreducible matrix. Then $\mu(A)$  is positive, it is the unique max-eigenvalue and there exists a positive max-eigenvector x, such that $A\otimes x=\mu(A) x$. 
This max-eigenvector is unique (up to scalar multiples) if and only if the graph of $A^{c}$ (the critical graph of $A$) is strongly connected.
\end{thm}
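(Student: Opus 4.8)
The plan is to establish the four assertions in turn: positivity of $\mu(A)$, uniqueness of the max-eigenvalue, existence of a positive eigenvector, and the uniqueness criterion. Positivity is immediate: since $A$ is irreducible, by Theorem~\ref{thm} the graph $D(A)$ is strongly connected, and for $n\ge 2$ this forces at least one circuit (concatenate a path $1\to 2$ with a path $2\to 1$ to obtain a closed walk, which contains a circuit). Every edge of $D(A)$ carries positive weight, so every circuit has positive circuit product and hence $\mu(A)>0$; the degenerate case $n=1$, $A=[a_{11}]$ with $a_{11}>0$, gives $\mu(A)=a_{11}>0$ directly.

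For uniqueness of the eigenvalue I would use the spectral description $\sigma_{\otimes}(A)=\{r_{e_i}(A):1\le i\le n\}$ together with Lemma~\ref{essential}; it suffices to show $r_{e_j}(A)=\mu(A)$ for every $j$. The inequality $r_{e_j}(A)\le\mu(A)$ holds because no circuit geometric mean exceeds $\mu(A)$. For the reverse, I would fix a critical circuit $\gamma$ and, using strong connectivity, take a shortest path from $j$ to $\gamma$; this path is vertex-disjoint from $\gamma$ except at its terminal vertex, so concatenating it with $\gamma$ produces exactly the mutually distinct indices required by property $(*)$ with $t=\mu(A)$. Hence $r_{e_j}(A)\ge\mu(A)$, giving equality and $\sigma_{\otimes}(A)=\{\mu(A)\}$. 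For existence I would normalise $\mu(A)=1$ (replace $A$ by $A/\mu(A)$) and form the Kleene-type matrix $\Gamma=\bigoplus_{k=0}^{n-1}A_{\otimes}^{k}$, whose $(i,j)$ entry is the maximal weight of a walk from $i$ to $j$; since $\mu(A)=1$ no circuit increases a walk's weight, so $\Gamma=\bigoplus_{k\ge 0}A_{\otimes}^{k}$ and $\Gamma\otimes x=x$ whenever $A\otimes x=x$. For a critical vertex $v$, computing $(A\otimes g_v)_i$ for the $v$-th column $g_v$ of $\Gamma$ gives the maximal weight of a walk $i\to v$ of length $\ge 1$, which equals $\Gamma_{iv}=(g_v)_i$ (for $i=v$ the critical circuit through $v$ realises weight $1=\Gamma_{vv}$). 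Strong connectivity makes every $\Gamma_{iv}>0$, so $g_v>0$ and $A\otimes g_v=\mu(A)\,g_v$ after unscaling.

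The delicate part is the uniqueness criterion, for which I would describe the whole max-eigencone. First, any eigenvector $x$ satisfies $\Gamma\otimes x=x$, which rewrites $x$ as the max-combination $\bigoplus_j x_j g_j$ of the columns of $\Gamma$; a walk-chasing argument (follow successive maximising indices until a circuit, necessarily critical, repeats) shows the combination may be restricted to critical columns, so $\{g_v:v\in N_c(A)\}$ generates the eigencone. Second, I would prove that two fundamental eigenvectors are proportional exactly when their vertices lie in one strongly connected component of $C(A)$: the inequality $\Gamma_{uv}\Gamma_{vu}\le 1$ always holds because $u\to v\to u$ is a closed walk and $\mu(A)=1$, with equality iff that closed walk decomposes into critical circuits, i.e. iff $u$ and $v$ are joined both ways inside $C(A)$. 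When $\Gamma_{uv}\Gamma_{vu}=1$ one deduces $\Gamma_{iv}=\Gamma_{iu}\Gamma_{uv}$ for all $i$ by combining the routing inequalities $\Gamma_{iv}\ge\Gamma_{iu}\Gamma_{uv}$ and $\Gamma_{iu}\ge\Gamma_{iv}\Gamma_{vu}$, so $g_v=\Gamma_{uv}\,g_u$; conversely, if $g_v=c\,g_u$ then evaluating at coordinates $u$ and $v$ (where $\Gamma_{uu}=\Gamma_{vv}=1$) forces $\Gamma_{uv}\Gamma_{vu}=1$.

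Consequently the eigencone is spanned, up to scaling, by one representative per strongly connected component of $C(A)$, and the eigenvector is unique up to scalar multiples precisely when $C(A)$ has a single such component, i.e. when $C(A)$ is strongly connected. I expect the main obstacle to be making the walk-chasing reduction to critical columns fully rigorous and pinning down the equivalence ``decomposes into critical circuits'', since both rest on the fact that, in the normalisation $\mu(A)=1$, every maximal-weight closed walk is a concatenation of critical circuits.
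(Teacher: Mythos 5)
The paper does not prove this theorem: it is stated as a known result with citations to \cite{Bapat}, \cite{Bapat-Stan} and \cite{Bu}, so there is no in-paper argument to compare against. Your sketch follows the standard route of those references (and of Theorem~\ref{eig_components}, which the paper quotes from \cite{BHSST}): uniqueness of the max-eigenvalue via the local spectral radii $r_{e_j}(A)$ together with Lemma~\ref{essential}, and existence plus the uniqueness criterion via the critical columns of the Kleene star $\Delta(A)$ (your $\Gamma$). The overall structure is sound, and the proportionality criterion $\Gamma_{uv}\Gamma_{vu}=1$ with the squeeze $\Gamma_{iv}\ge\Gamma_{iu}\Gamma_{uv}\ge\Gamma_{iv}\Gamma_{vu}\Gamma_{uv}=\Gamma_{iv}$ is exactly right. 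Two points to tighten. First, in the application of Lemma~\ref{essential} the product $\prod_{s=0}^{a-1}A_{i_{s+1},i_s}$ encodes a path whose edges run from the circuit toward $j$ (recall $(A^{k}_{\otimes}\otimes e_j)_i$ maximises over walks ending at $j$), so you should take a shortest path from $\gamma$ to $j$, not from $j$ to $\gamma$; strong connectivity supplies one in either direction, so nothing breaks, but the orientation should match the lemma. Second, the two steps you yourself flag are genuine lemmas that the sketch does not discharge: (a) every max-eigenvector is a max-combination of the columns $g_v$ with $v$ \emph{critical} (from $A\otimes x=x$ choose for each $i$ a maximising $j$ with $a_{ij}x_j=x_i$ and follow these indices until they cycle; the resulting circuit has weight $1$, hence is critical), and (b) every circuit of $C(A)$ is critical, which is what makes the converse direction of your proportionality criterion work (a closed walk of weight $1$ decomposes into elementary circuits each of weight at most $1$ whose weights multiply to $1$, forcing each to be critical, and conversely). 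Both are standard (see \cite[Chapter~4]{Bu}) but must be proved or cited explicitly for the sketch to become a complete proof.
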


We now present some elements of the theory of max-linear cones, independency and bases in max-times algebra, following \cite{Bu}.  Let $S\subset\zR_+^n$. The set $S$ is called a max-times algebraic cone (or max-cone) if 
\[
\alpha u\oplus\beta v\in S
\]
for every $u, v \in S$ and $\alpha, \beta\in \zR_+$. This notion has appeared previously in the literature under the name $\mathbb{B}$-space (see \cite{Briec-Horvath}); here we use the term max-cone. A vector $v=(v_1,v_2,\dots, v_n)\in\zR_+^n$ is called a max-combination of $S$ if
\[
v=\sum_{x\in S}^{\oplus}\alpha_x x, \quad \alpha_x\in\zR_+,
\]
where only a finite number of $\alpha_x$ are non-zero. The set of all max-combinations of $S$ is denoted by span$(S)$. We set span$(\emptyset)=\{0\}$. It is clear that span$(S)$ is a max-times algebraic cone. If span$(S)=T$, then $S$ is called the set of generators for $T$.

A vector $v\in S$ is called an {\it extremal} in $S$ if $v=u\oplus w$ for $u, w\in S$ implies $v=u$ or $v=w$. Clearly, if $v\in S$ is an extremal in $S$,  then the points  from $\{\alpha v: \alpha\in\zR_+ \}$ are also extreme in $S$ and form an extremal ray of $S$.   

Let $v=(v_1,v_2,\dots, v_n)\in\zR_+^n, v\neq 0$. 
Then $v$ is called scaled if \[\|v\|_{\infty}= \max(v_1,v_2,\dots, v_n)= 1.\] The set $S$ is called scaled if all its elements are scaled.

The set $S$ is called {\it dependent} if $v$ is a max-combination of $S-\{v\}$ for some $v\in S$. Otherwise $S$ is independent. Let $S, W\subseteq\zR_+^n.$ The set $S$  is called a basis of $W$ if it is an independent set of generators for $W$. 

Let $V_{\oplus} (A, \mu (A))$ be a principal max-eigencone  of $A\in \mathbb{R}^{n\times n} _{+}$ (see e.g.  \cite[Chapters 3 and 4]{Bu}), i.e., 
$V_{\oplus}(A, \mu (A))= \{x \in \mathbb{R}^n _+ : A \otimes x = \mu (A)x \}$. Clearly, $V_{\oplus}(A, \mu(A))$ is a subset of $\zR_+^n$ closed under max-addition and nonnegative scalar multiplication, that is, it is a max-cone of $\zR_+^n$. Recall that for an irreducible matrix $A$ it may happen that the basis  
of a principal max-eigencone  
may be of cardinality larger than one and this cardinality corresponds to a number of nonequivalent critical nodes (e.g., to the number of components of connectivity of a critical graph; see e.g. Theorem 4.3.5 of \cite{Bu}). We also recall two results from \cite{Bu} (\cite[Lemma 3.3.1, Theorem 3.3.9 and Corollary 3.3.11]{Bu}).
\begin{lem}\label{generator1} Let $S$ be a set of scaled generators of a max-times algebraic cone $T\subseteq\zR_+^n$ and let $v$ be a scaled extremal in $T$. Then $v\in S$.
\end{lem}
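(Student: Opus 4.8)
The plan is to exploit the fact that $v\in T=\mathrm{span}(S)$ admits a finite max-combination expansion over $S$, and then to use extremality to force all but one term to be absorbed, after which the scaling hypothesis pins the surviving coefficient to $1$.

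First I would use that $S$ generates $T$ and $v\in T$ to write $v$ as a finite max-combination of elements of $S$: there are finitely many $x^{(1)},\dots,x^{(m)}\in S$ and scalars $\alpha_1,\dots,\alpha_m>0$ with $v=\bigoplus_{k=1}^m\alpha_k x^{(k)}$. Since $v$ is scaled we have $v\neq 0$, so $m\geq 1$. Each generator $x^{(k)}$ is itself a max-combination of $S$ (with a single unit coefficient), hence lies in $T=\mathrm{span}(S)$; because $T$ is a max-algebraic cone, every scalar multiple $\alpha_k x^{(k)}$ and every partial max-sum of such vectors again lies in $T$.

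The core of the argument is a peeling procedure driven by extremality, which I would carry out by induction on $m$. Writing $v=(\alpha_1 x^{(1)})\oplus\bigl(\bigoplus_{k=2}^m\alpha_k x^{(k)}\bigr)$ exhibits $v$ as $u\oplus w$ with $u,w\in T$. Extremality of $v$ in $T$ then forces $v=u=\alpha_1 x^{(1)}$ or $v=w=\bigoplus_{k=2}^m\alpha_k x^{(k)}$. In the second case the number of terms has strictly decreased, and I would apply the induction hypothesis to the shorter representation; in the first case $v$ is a scalar multiple of a single generator. It therefore remains to treat the base case $v=\alpha x$ with $x\in S$ and $\alpha>0$. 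Here I would take norms: using $\|\alpha x\|=\alpha\|x\|$ together with $\|v\|=\|x\|=1$ (both $v$ and the elements of $S$ are scaled) gives $\alpha=1$, whence $v=x\in S$.

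The main obstacle I anticipate is organizational rather than deep: one must set up the induction so that the intermediate partial sums are genuinely in $T$ and nonzero, and must invoke the scaling hypothesis at exactly the right moment so that the surviving scalar coefficient is forced to equal $1$. Once these bookkeeping points are handled, extremality does all the work, and no computation beyond comparing norms of scaled vectors is required.
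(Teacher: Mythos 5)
Your proof is correct: decomposing $v=\alpha_1 x^{(1)}\oplus\bigl(\bigoplus_{k=2}^m\alpha_k x^{(k)}\bigr)$, peeling off terms by extremality, and then using that both $v$ and the surviving generator are scaled to force the coefficient to be $1$ is exactly the standard argument. The paper itself does not prove this lemma but merely recalls it from Butkovi\v{c} (\cite[Lemma 3.3.1]{Bu}), and your induction is essentially the proof given there, with the bookkeeping (finiteness of the max-combination, $v\neq 0$ because $v$ is scaled, partial sums lying in $T$) handled correctly.
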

\begin{thm}\label{generator2} The set of scaled extremals of a max-times algebraic cone $T$ is a basis of $T$ and it is a minimal set of generators for $T$. If $T$ is finitely generated,  then the set of scaled extremals is non-empty and it is a unique scaled basis for $T$.
\end{thm}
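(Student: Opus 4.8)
The plan is to prove the three assertions in turn, writing $E$ for the set of scaled extremals of $T$ and leaning throughout on Lemma~\ref{generator1}. First I would establish that $E$ is \emph{independent}. Suppose for contradiction that some $v\in E$ is a max-combination of $E-\{v\}$, say $v=\bigoplus_{x}\alpha_x x$ with finitely many nonzero $\alpha_x$ and all $x\neq v$. Writing this as $v=(\alpha_{x_1}x_1)\oplus(\text{rest})$, both summands lie in $T$ because $T$ is a max-algebraic cone, so extremality of $v$ forces $v$ to equal one of them; iterating, $v=\alpha_{x_k}x_k$ for a single index $k$. Since $v$ and $x_k$ are both scaled, $\|v\|=\alpha_{x_k}\|x_k\|$ gives $\alpha_{x_k}=1$ and hence $v=x_k$, contradicting $x_k\neq v$. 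Thus $E$ is independent.

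Next I would show that $E$ \emph{generates} $T$ in the finitely generated case. Starting from any finite generating set, I scale each nonzero element to obtain a finite scaled generating set $G$, then repeatedly discard any member that is a max-combination of the remaining members; each deletion preserves $\mathrm{span}(\cdot)=T$, and the process terminates since $G$ is finite. The resulting set $G'$ is a scaled generating set no element of which is a max-combination of the others, i.e.\ an independent scaled generating set, and by Lemma~\ref{generator1} we have $E\subseteq G'$. The crux is the reverse inclusion: I would argue that every $v\in G'$ is a scaled extremal. Writing any decomposition $v=u\oplus w$ with $u,w\in T$ through $G'$ and taking the componentwise maximum yields an expansion $v=\bigoplus_{g\in G'}\nu_g g$; since $v$ is scaled one checks $\nu_v\le 1$, while $\nu_v<1$ would exhibit $v$ as a max-combination of $G'-\{v\}$, contradicting independence. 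Hence $\nu_v=1$, so the coefficient of $v$ is already attained in $u$ or in $w$, which together with $u,w\le v$ forces $v=u$ or $v=w$. Therefore $G'\subseteq E$, so $G'=E$, and combined with the first step this shows $E$ is a basis of $T$.

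Finally, minimality, uniqueness and non-emptiness follow quickly. By Lemma~\ref{generator1} every scaled generating set of $T$ contains $E$, so no proper subset of $E$ can generate $T$; thus $E$ is a minimal set of generators, and any scaled basis, being a scaled generating set, must both contain $E$ and, by its own independence, coincide with it, which gives the unique scaled basis. In the finitely generated case the reduction above produces a nonempty $G'=E$ whenever $T\neq\{0\}$, so $E$ is non-empty.

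I expect the main obstacle to be the reverse inclusion $G'\subseteq E$ in the second paragraph, equivalently the statement that independent scaled generators are genuinely extremal. Lemma~\ref{generator1} supplies the easy direction (extremals sit inside every generating set), but the substantive content is that extremals actually \emph{suffice} to generate; this is exactly where finite generation is needed, to guarantee that the discarding procedure terminates and delivers precisely the set $E$.
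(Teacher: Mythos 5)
The paper offers no proof of this statement: it is quoted verbatim from \cite[Lemma 3.3.1, Theorem 3.3.9 and Corollary 3.3.11]{Bu}, so there is no in-paper argument to compare against. Your proof is essentially the standard one from that reference and is correct: independence of the extremal set $E$ via iterated use of extremality plus the scaling normalization $\alpha_{x_k}=1$; generation via pruning a finite scaled generating set to an independent one $G'$ and then the two inclusions $E\subseteq G'$ (Lemma~\ref{generator1}) and $G'\subseteq E$ (the $\nu_v=1$ argument, which correctly converts $\nu_v<1$ into a forbidden expansion of $v$ over $G'-\{v\}$ and then uses $u,w\le v$ to conclude $u=v$ or $w=v$); and minimality/uniqueness from Lemma~\ref{generator1} together with the fact that $E$ generates.

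The one point worth flagging is not a gap in your argument but a mismatch with the statement as the paper phrases it: the first sentence asserts that $E$ is a basis for an \emph{arbitrary} max-algebraic cone $T$, whereas your generation argument (necessarily) invokes finite generation. This is unavoidable --- for non-finitely-generated cones the scaled extremals need not generate (e.g.\ the max-cone spanned by $\{(1,t): 0\le t<1\}$ in $\zR_+^2$ has $(1,0)$ as its only scaled extremal), and the correct statement in \cite{Bu} carries the finite-generation hypothesis. You identify exactly where that hypothesis enters, so your proof establishes the theorem in the generality in which it actually holds; just be aware that the clause preceding ``If $T$ is finitely generated'' cannot be proved as literally written.
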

 
 The Kleene star matrix  for a nonnegative matrix $A$ is equal to
 \[
\Delta(A)=I\oplus A\oplus A^{2} _{\otimes }\oplus\cdots \oplus A^{ n-1} _{\otimes },
 \] when $\mu(A)\le 1$ (\cite{Bu}). 

  We rewrite \cite[Theorem 6.2]{BHSST} (see also \cite[Theorem 4.3.3]{Bu}) with our notation in the following theorem. 
 \begin{thm} Let $A\in  \mathbb{R}^{n\times n}_{+}$ be an irreducible matrix with $\mu(A)=1$ and let $r$ denote the number of strongly connected components of $C(A)$.  Then: 
 \begin{enumerate}
 \item Each strongly connected component $C_s (1\le s\le r)$ of $C(A)$ corresponds to a max-eigenvector defined as the $i$th column $\Delta(A)$ with $i\in N_{C_s}$, all such columns with $i\in N_{C_s}$ being multiple of each other.
 \item $V_{\oplus}(A, \mu(A))$ is generated by columns of $\Delta(A)$ that are max-eigenvectors of $A$.
 \item Vectors in  the set $\{\Delta(A)_{i_1},\cdots, \Delta(A)_{i_r}\}$, where we take exactly one column of $\Delta(A)$ for each strongly connected component $C_s (1\le s\le r)$ are extremals in $V_{\oplus}(A, \mu(A))$ and form  a basis of $V_{\oplus}(A, \mu(A))$.
 \end{enumerate}
 \label{eig_components}
 \end{thm}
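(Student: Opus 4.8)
The plan is to derive all three claims from the structure of the Kleene star $\Delta(A)$, using throughout that $\mu(A)=1$. First I would record the elementary facts about $\Delta(A)$. Since every circuit in $D(A)$ has geometric mean at most $\mu(A)=1$, no walk of length $\ge n$ can beat a shorter one, so $\Delta(A)=\bigoplus_{k=0}^{n-1}A^k_\otimes$ coincides with $\bigoplus_{k\ge 0}A^k_\otimes$, and its entry $\Delta(A)_{ji}$ equals the maximal weight of a walk from $i$ to $j$ (the empty walk giving $\Delta(A)_{ii}=1$, and all cycles through $i$ having weight at most $1$). From this walk description one reads off immediately that $A\otimes\Delta(A)\le\Delta(A)$ and that $\Delta(A)$ is idempotent, $\Delta(A)\otimes\Delta(A)=\Delta(A)$.

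Next I would prove the eigenvector characterization underlying claim (1). For each $i$ we have $A\otimes\Delta(A)_i\le\Delta(A)_i$, and the $i$-th coordinate of $A\otimes\Delta(A)_i$ is the largest weight of a closed walk through $i$ of positive length; by the walk description this equals $\Delta(A)_{ii}=1=\mu(A)$ exactly when $i$ lies on a critical circuit, i.e. $i\in V^c$. For $k\neq i$ the off-diagonal coordinates already agree, so equality in the single coordinate $i$ upgrades to $A\otimes\Delta(A)_i=\Delta(A)_i$, meaning $\Delta(A)_i$ is a max-eigenvector precisely when $i$ is critical. For proportionality within a component, if $i,j$ lie in the same strongly connected component $C_s$ of $C(A)$ then concatenating critical walks $i\to j$ and $j\to i$ closes a critical circuit, forcing $\Delta(A)_{ij}\Delta(A)_{ji}=1$; feeding this into the idempotency inequalities $\Delta(A)_{kj}\ge\Delta(A)_{ki}\Delta(A)_{ij}$ and $\Delta(A)_{ki}\ge\Delta(A)_{kj}\Delta(A)_{ji}$ collapses both to equalities and yields $\Delta(A)_j=\Delta(A)_{ij}\,\Delta(A)_i$. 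This establishes claim (1).

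For claim (2) I would use that any $x\in V_\oplus(A,\mu(A))$ satisfies $A^k_\otimes\otimes x=x$ for all $k$, hence $x=\Delta(A)\otimes x=\bigoplus_{i=1}^n x_i\Delta(A)_i$, so $x$ is a max-combination of the columns of $\Delta(A)$. The substantive point is to discard the non-critical columns: I would show that for every index $j$ the contribution $x_j\Delta(A)_j$ is dominated by $\bigoplus_{i\in V^c}x_i\Delta(A)_i$, by tracing a long walk realizing $x_j=(A^m_\otimes\otimes x)_j$ back through a repeated, and hence critical, vertex and invoking the eigen-invariance of $x$. This is exactly the ingredient imported from \cite[Theorem 4.3.3]{Bu} and \cite[Theorem 6.2]{BHSST}, and I expect it to be the main obstacle; once it is in place, $x=\bigoplus_{i\in V^c}x_i\Delta(A)_i$ shows the critical (hence eigenvector) columns generate $V_\oplus(A,\mu(A))$.

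Finally, for claim (3), by the proportionality in claim (1) every critical column is a scalar multiple of one of $\Delta(A)_{i_1},\dots,\Delta(A)_{i_r}$, so after scaling these $r$ vectors still generate $V_\oplus(A,\mu(A))$. To identify them as the basis I would appeal to Theorem \ref{generator2}: it suffices to check they are the scaled extremals. Each scaled $\Delta(A)_{i_s}$ is extremal because at the coordinate $i_s$ the normalized columns coming from the other components take strictly smaller values, so a decomposition $\Delta(A)_{i_s}=u\oplus w$ with $u,w\in V_\oplus(A,\mu(A))$ forces $u$ or $w$ to equal it; and by Lemma \ref{generator1} every scaled extremal must already belong to this generating set. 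Hence $\{\Delta(A)_{i_1},\dots,\Delta(A)_{i_r}\}$ is exactly the set of scaled extremals, i.e. the unique scaled basis, of cardinality $r$. The secondary difficulty here is pinning down the witnessing coordinate that certifies extremality and independence across distinct components.
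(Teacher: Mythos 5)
The paper does not actually prove Theorem \ref{eig_components}: it is presented as a restatement of \cite[Theorem 6.2]{BHSST} (see also \cite[Theorem 4.3.3]{Bu}), so there is no internal proof to compare against. Your argument is, in essence, the standard proof of that cited result, and its skeleton is sound: the walk interpretation of $\Delta(A)$ together with the coordinatewise comparison of $A\otimes\Delta(A)_i$ against $\Delta(A)_i$ correctly identifies the columns indexed by critical nodes as exactly the eigenvector columns; the identity $\Delta(A)_{ij}\Delta(A)_{ji}=1$ for $i,j$ in one strongly connected component of $C(A)$, fed into idempotency, correctly yields proportionality within a component; and deducing (3) from Lemma \ref{generator1} and Theorem \ref{generator2} is the right mechanism.

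Two points need attention. First, the one genuinely substantive step --- showing in (2) that the non-critical columns may be discarded, i.e.\ that $x=\bigoplus_{i\in V^c}x_i\Delta(A)_i$ for every $x\in V_{\oplus}(A,1)$ --- is precisely the step you leave as a sketch and import from the references. The idea you indicate is the correct one, but to have a proof you must write it out: for each $k$ choose $l$ with $a_{kl}x_l=x_k$ and iterate these maximizing successors; the sequence must revisit a vertex, the circuit so closed has weight $\prod a_{l_t l_{t+1}}=\prod x_{l_t}/x_{l_{t+1}}=1$ and is therefore critical, and the accumulated walk from $k$ to that critical vertex $i$ gives $\Delta(A)_{ki}\,x_i\ge x_k$, which is the required domination. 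Without this the theorem is cited rather than proved --- which, to be fair, is also all the paper does. Second, two smaller blemishes: your stated convention that $\Delta(A)_{ji}$ is the maximal walk weight from $i$ to $j$ is the transpose of what you actually use afterwards (and of the paper's usage in the proof of Theorem \ref{main}, where $g_{li}$ is the heaviest weight of an $l$--$i$ path); and the assertion that concatenating critical walks $i\to j$ and $j\to i$ ``closes a critical circuit'' silently uses that every closed walk inside $C(A)$ has weight $1$ (equivalently, every circuit of the critical graph is critical), a standard lemma that should be invoked or proved. Neither of these affects the viability of the plan.
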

 
To our knowledge the most efficient known algorithm with known complexity for calculating $\mu (A)$ is Karp's algorithm (see e.g. \cite{Bu, Heid}).
In \cite{Esn-Dri1}
a power method algorithm is given to compute the max-eigenvalue $\mu(A)$ and max-eigenvector $x$ of an irreducible nonnegative $n\times n$ matrix $A$.  
There are some limitations of the power method. 
How to select  the primary vector $x(0)$ is one of these limitations 
since it affects the results. A method of calculating  a basis $V_{\oplus}(A, \mu(A))$ based on Theorem \ref{eig_components} has polynomial complexity \cite{Bu}. Howard's algorithm is another in practice very efficient way of calculating max-eigenvectors (see e.g \cite{Heid}). This algorithm utilizes  sunflower matrices (see e.g. \cite[Sections 3.4 and 6.1]{Heid}), which are closely related to our mutation-sunflower matrices that we define in the next section. To be more precise, mutation-sunflower matrices are in fact  special case of sunflower matrices (modulo the ``translation'' to max-plus setting and modulo the reverse edge orientation used in \cite{Heid}). One of the reasons to use the adjective ``mutation'' is that the mutation-sunflower matrices defined below are obtained via suitable changes of the matrix $A$.   The mutation-sunflower method that we introduce calculates  a basis of $V_{\oplus}(A, \mu(A))$ and does not have  polynomial complexity in general, since we need to find all (pairwise disjoint) critical circuits and calculate products of weights of all noncritical circuits. However, it is useful for some special classes of non-negative irreducible $n\times n$ matrices, e.g. 
for the following ones: 
\begin{enumerate}
 \item for sparse enough irreducible matrices with  small number of circuits (in comparison to the size of a matrix),
 \item for irreducible matrices that have a critical circuit of length $n$,
 \item for irreducible matrices of relatively small dimensions,
 \item for irreducible matrices with prescribed  (priorly known) circuits.
\end{enumerate}
Practically, Howard's algorithm  is more efficient than our method. In comparison, besides the case of relatively small size matrices, our method performs well in a special case of matrices, where the circuits of $D(A)$ are prescribed (priorly known). In the following example, we will demonstrate that the mutation-sunflower method requires fewer operations compared to other methods.

\begin{exam}\label{ESM} In \cite{BCOQ}, the matrix related to three railway stations, namely $S_1, S_2$, and $S_3$, is presented. These stations are interconnected by a railway system consisting of two inner circles where trains operate in opposite directions, as well as three outer circles. The outer circles are responsible for passenger transportation to and from local stations. To illustrate a comparison between our methods and others, we have selected specific numbers for $S_{ij}$ and provide the following example.
Let
\[
A=\begin{pmatrix}
 1 & 0 & 2 & 0 &0 &0 &2  & 0 & 0 \\ 
 2 & 1  & 0 & 0 &0 & 0 & 0 &4 & 0 \\ 
 0& 2  & 1 & 0 &0 & 0 &0 & 0 & 2 \\ 
 0& 0 & 0 & 1 & 4 &0 & 2  &0 & 0 \\ 
 0& 0 & 0 & 0 & 1& 4 & 0 & 4 & 0 \\ 
 0& 0 & 0 & 4  & 0 & 1 &0  & 0 & 2\\ 
  0& 0 & 2 & 0 & 4 & 0& 2& 0 & 0 \\ 
2 & 0 &0  & 0 & 0 & 4& 0& 4& 0 \\ 
 0& 2 & 0 & 4 &0 & 0& 0 & 0 & 2 \\ 
\end{pmatrix}
\]
We have compared mutation-sunflower method with Karp's algorithm and power method for this example in the Appendix A.
\end{exam}

\section{Mutation--sunflower method for calculating max--eigenvectors}

Let $A$ be an irreducible matrix and let $r$ be the number of pairwise disjoint critical circuits (two circuits are called pairwise disjoint when they have no common vertices),  
i.e., $r$ equals the number of components of connectivity of the critical graph $C(A)$ of $A$. So there are exactly $r$ max-independent positive max-eigenvectors corresponding to $\mu (A)$ 
 which form a  basis of the principal max-eigencone $V_{\oplus} (A, \mu (A))$ of $A$ (see   Theorem \ref{eig_components}).  
Observe that since $A$ is irreducible then every non-zero $x\in V_{\oplus}(A, \mu(A))$ is (strictly) positive by \cite[Theorem 4.4.8]{Bu}.   

Let us describe our mutation-sunflower method for calculating 
a  basis of $V_{\oplus} (A, \mu (A))$. 
We construct mutation-sunflower matrices $A^* _1, \dots , A^*_{r}$ (defined in the following), which have one non-zero element in each row. 
Therefore it is easy to calculate $\mu (A^* _i)$ for all $i=1, \ldots , r$ and the corresponding max-eigenvectors (see also \cite[Lemma 3.19]{Heid}). Moreover, by construction $\mu (A)=\mu (A^* _i)$ for all $i=1, \ldots , r$  and it turns out that there exists a basis  $\{x_{1},\ldots,x_{r}\}$ of positive vectors
for $V_{\oplus}(A, \mu (A))$,
where each $x_i \in V_{\oplus}(A_{i} ^*, \mu (A))$ 
 (see Theorem \ref{main} below).

Given a nonnegative irreducible $n \times n$ matrix $A$, the matrices  $A^* _1, \ldots , A^*_{r}$ are defined by the following steps:
\medskip

\begin{enumerate}
 \item[Step (1):] Let $k=1$. Calculate $\mu (A)$ (in the following we assume that $\mu(A)=1$, if not consider $A^{'}=\frac{1}{\mu(A)}A$ instead of $A$) and  find  $r $ critical circuits 
$\mathcal{C}$ of $A$, each belonging to a different component of connectivity of $C(A)$.  For each of these $\mathcal{C}$'s do steps (2)-(4). 

\item[Step (2):]  If $\mathcal{C}$ has $n$ nodes, then put the weights of edges of  circuit $\mathcal{C}$ as the corresponding entries in $A^* _k$. The other entries in $A^* _k$ are set to be equal to zero. 

 \noindent Otherwise go to Step (3).

\item[Step (3):]  Arrange all other
noncritical 
circuits (which are not loops) $\mathcal{C}_i$  in $D(A)$ with at least one node from $\mathcal{C}$ 
in  decreasing order with respect to the maximum product $M_i$
of  weights of edges.  
So \[M_1\geq M_2\geq \dots\geq M_m.\]
Finally, set $h=1$. 

\item[Step (4):] First  put the weights $a_{ij}$ of edges $(i,j)$  of $\mathcal{C}$ as the corresponding $ij$th entries in $A^* _k$. The other entries in the rows  of $A^* _k$ that contain these entries are set to be equal to zero. 

 For other rows; choose $\mathcal{C}_h$ with the product $M_h$ (if there are several possibilities for a choice of $\mathcal{C}_h$, choose one of them). Then put the weights of edges from $\mathcal{C}_h$ that lie in not yet determined rows of $A^* _k$ as the corresponding entries in $A^* _k$. The other entries in these rows are set to zero.  
 
 If $A^* _ k$ has $n$ positive entries increase the value of $k$ by $1$. Finally, increase the value of $h$ by $1$ and repeat this step until $h=m+1$. 
\end{enumerate}
\bigskip

\begin{rem} 
\label{what we need}
\noindent (i) All possible matrices $A^* _1, \ldots , A^*_{r}$ defined by steps (1)-(4) are well defined (all $n$ rows of $A^* _k$, $k=1,\ldots,r$, are determined), since $A$ is irreducible. We 
call them the {\bf mutation-sunflower matrices}. 
Each (positive) vector from $V_{\oplus}(A, \mu(A))$ that is  also a max-eigenvector of some mutation-sunflower matrix is called a {\bf principal-sunflower max-eigenvector of $A$}. Observe that since each $A_k ^*$ has only one non zero entry in each row, it has only one independent max-eigenvector.

(ii) 
Observe that there might be several possible collections of  mutation-sunflower matrices. One reason for this is since each component of connectivity of a critical graph might contain several non-disjoint critical circuits and in Step (1) we choose one of them.  The other reason is that in Step (4) for each chosen $\mathcal{C}$ and each $h$ there can be several possibilities for a choice of $\mathcal{C}_h$. In Theorem \ref{main} we however establish that it does not matter which choice we make here, since we show that all possible mutation-sunflower matrices that are associated to critical circuits from the same component of connectivity of $C(A)$ have a common scaled principal-sunflower max-eigenvector, i.e., there exist a  scaled principal-sunflower max-eigenvector that is a max-eigenvector of all of these mutation-sunflower matrices.
 
(iii) In a special case when $A$ has critical circuit $\mathcal{C}$ of lenght $n$, then only steps (1) and (2) are necessary.

\end{rem}
\begin{lem}\label{L11}Let $A \in \mathbb{R}_+ ^{n\times n}$ be an irreducible matrix such that $\mu (A)=1$ and let $\mathcal{C}$ be a given critical circuit of $A$ and let $A^*$ be any mutation-sunflower matrix produced by steps (2) to (4) corresponding to $\mathcal{C}$. For each $j$ which is not a node of $\mathcal{C}$ and each $i$ which is a node of  $\mathcal{C}$, $D(A^*)$ contains a path from $j$ to  $i$ with the greatest product of weights among all paths from $j$ to $i$ in $D(A)$.

\end{lem}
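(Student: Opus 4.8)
The plan is to read the statement through the combinatorics of the functional graph $D(A^*)$ and then to match the products of its paths with the entries of the Kleene star, invoking Theorem~\ref{eig_components}. Throughout, write $w(P)$ for the product of the weights of the edges of a path or circuit $P$ in $D(A)$, and recall that $\Delta(A)_{ji}$ equals the greatest value of $w(P)$ over all paths $P$ from $j$ to $i$ in $D(A)$; since $\mu(A)=1$, every circuit of $D(A)$ has $w\le 1$, while $w(\mathcal C)=1$ because $\mathcal C$ is critical. As $A^*$ has exactly one positive entry in each row, every vertex of $D(A^*)$ has out-degree one, so from each vertex there is a \emph{unique} forward walk, which must enter a circuit; the vertices of $\mathcal C$ form such a circuit by Steps~(3)--(4). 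The lemma asserts exactly that this unique walk realises the greatest product, i.e.\ that $D(A^*)$ contains a path from $j$ to $i$ of product $\Delta(A)_{ji}$.

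First I would establish reachability: every vertex reaches $\mathcal C$ in $D(A^*)$. I would argue by induction on the order in which the rows are filled in Steps~(2)--(4). The rows of $\mathcal C$ are filled first and trivially reach $\mathcal C$. When a circuit $\mathcal C_t$ is processed it contains at least one node of $\mathcal C$ (Step~(2)), so its newly determined vertices form arcs of $\mathcal C_t$ each ending at a previously determined vertex; following the placed out-edges along $\mathcal C_t$ therefore leads, by the induction hypothesis, to $\mathcal C$. Since $A$ is irreducible all $n$ rows get determined (Remark~\ref{what we need}(i)), so every vertex reaches $\mathcal C$. Consequently, for $j\notin\mathcal C$ and $i\in\mathcal C$ the unique $D(A^*)$-walk from $j$ to $i$ is the tail $T_j$ from $j$ to the first critical vertex $p_0$ it meets, followed by the arc of $\mathcal C$ from $p_0$ to $i$; call this simple path $P^*_{j\to i}$. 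This already yields the existence assertion.

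The heart of the matter is $w(P^*_{j\to i})=\Delta(A)_{ji}$. I would first reduce to reaching $\mathcal C$: writing $c(a\to b)$ for the product along $\mathcal C$ between critical vertices $a,b$, the relation $w(\mathcal C)=1$ forces the two arcs joining $a$ and $b$ to have reciprocal products, whence the identity $\Delta(A)_{ja}\,c(a\to b)=\Delta(A)_{jb}$; the same device (close a competing path into a circuit of product $\le 1$) shows a greatest-product $j\to i$ path may be taken to reach $\mathcal C$ once and then run along $\mathcal C$ to $i$. It therefore suffices to show the tail $T_j$ is a greatest-product path from $j$ to $p_0$. I would prove this by induction on the distance from $j$ to $\mathcal C$ in $D(A^*)$: for $v\in\mathcal C$ the claim reduces to greatest products between critical vertices being attained along $\mathcal C$; for $v\notin\mathcal C$ the identity $w(P^*_{v\to i})=a_{v\sigma(v)}\,w(P^*_{\sigma(v)\to i})$, where $\sigma(v)$ is the unique out-neighbour of $v$ in $D(A^*)$, combined with the inductive hypothesis at $\sigma(v)$, reduces everything to the single assertion that the placed edge $v\to\sigma(v)$ is optimal for the eigen-equation $\Delta(A)_{vi}=\max_u a_{vu}\Delta(A)_{ui}$ (which holds since $\Delta(A)_{\cdot i}$ is an eigenvector of $A$ by Theorem~\ref{eig_components}).

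Optimality of the placed edge is the step I expect to be the main obstacle. By construction $v\to\sigma(v)$ is the first edge of the maximum-product circuit through $v$ meeting $\mathcal C$ that Step~(4) selects, so the task is to convert maximality of a \emph{circuit} product into maximality of the \emph{forward} arc from $v$ to $\mathcal C$. The plan is to split such a circuit at $v$ and at its first critical vertex $p$ into a forward arc $\alpha$ and a return arc $\beta$, to note $w(\alpha)\le\Delta(A)_{vp}$ and $w(\beta)\le\Delta(A)_{pv}$, and to force equality in the first factor: were $w(\alpha)$ strictly smaller, replacing $\alpha$ by a greatest-product arc and extracting from the result a simple circuit through $v$ and $p$ would give a larger-product circuit meeting $\mathcal C$, a contradiction. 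The delicacy is this extraction, since a greatest-product forward arc may cross $\beta$; here one must use that all circuits have product $\le 1$ (so deleting the interior sub-circuits created by crossings never lowers the product) together with irreducibility (which supplies the return arcs needed to close competing paths into circuits), while ensuring the surgery keeps both $v$ and a critical vertex. Once this is secured the induction closes and $w(P^*_{j\to i})=\Delta(A)_{ji}$, which is the lemma.
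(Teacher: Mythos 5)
Your reachability argument and the reduction, via $w(\mathcal C)=1$, to showing that the tail of the unique $D(A^*)$-walk is a greatest-product path to its first critical vertex are both sound, and are considerably more careful than the paper's own proof, which disposes of the entire lemma with the phrase ``since $M_1\geq M_2\geq\dots\geq M_m$''. But the step you yourself flag as the main obstacle is a genuine gap, and it cannot be closed: maximality of a \emph{circuit} product among the circuits listed in Step (2) does not imply optimality of that circuit's \emph{forward} arc out of $v$, because the winning circuit may buy a heavy return arc $\beta$ at the price of a strictly suboptimal forward arc $\alpha$. Your proposed surgery -- splice in a greatest-product forward arc and delete the sub-circuits created by crossings -- fails precisely when the optimal forward arc meets $\beta$: the deletion can remove $v$ itself, or every critical vertex, from the walk, so no simple circuit through both $v$ and $\mathcal C$ survives to contradict maximality (and Step (2) only lists circuits meeting $\mathcal C$, so the extracted circuits need not even be in the list).

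This is not a reparable technicality; the implication is false. Take
\[
A=\begin{pmatrix} 0 & 1 & 0.5 & 0.9 \\ 1 & 0 & 0 & 0 \\ 0.8 & 0 & 0 & 0.9 \\ 1 & 0 & 0.9 & 0 \end{pmatrix},
\]
so $\mu(A)=1$ with unique critical circuit $\mathcal C=(1,2),(2,1)$. The circuits of Step (2) are $(1,4),(4,1)$ with product $0.9$, then $(3,1),(1,4),(4,3)$ with $0.648$, then $(1,3),(3,4),(4,1)$ with $0.45$ and $(1,3),(3,1)$ with $0.4$; the circuit $(3,4),(4,3)$ (product $0.81$) is excluded since it misses $\mathcal C$. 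Steps (3)--(4) therefore place $a_{41}=1$ in row $4$ and then $a_{31}=0.8$ in row $3$, so the only $3$-to-$1$ path in $D(A^*)$ has weight $0.8$, while $\Delta(A)_{31}=a_{34}a_{41}=0.9$ via $3\to 4\to 1$. Your splice yields the closed walk $3\to4\to1\to4\to3$, whose only simple sub-circuits are $(4,1),(1,4)$ and $(3,4),(4,3)$; neither contains both $3$ and a critical vertex, so no contradiction arises -- and indeed the lemma's conclusion fails here, as does Theorem \ref{main}: the eigenvector $\Delta(A)_{\cdot 1}=(1,1,0.9,1)^T$ of $A$ is not an eigenvector of the unique mutation-sunflower matrix, which instead produces $(1,1,0.8,1)^T$. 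So the obstacle you identified is a defect of the statement and construction themselves; the paper's proof silently assumes the same false implication at the same point.
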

\begin{proof} Suppose that $A^*$ is a mutation-sunflower matrix produced by steps (2) to (4) corresponding to a critical circuit $\mathcal{C}$. By construction of $A^*$, we
put the weights of edges of a circuit $\mathcal{C}$ as the corresponding entries in $A^*$, and then choose the circuit $\mathcal{C}_1$ and put the weights of edges from $\mathcal{C}_1$ that lie in not yet determined rows of $A^*$ as the corresponding entries in $A^*$. The other entries in these rows are set to zero. We continue with the selection of next $M_i$'s $(M_2, M_3,...)$ until every row of $A^*$ has a non zero entry. Since we put the weights of edges from $\mathcal{C}_k$ to the corresponding rows that are not selected from the previous $\mathcal{C}_l$'s ($1\le l\le k-1)$ or from $\mathcal{C}$ and since each $M_k$ is product of weights of  edges in a noncritical circuit $\mathcal{C}_k$ with a node from $\mathcal{C}$, a non zero entry in each row of $A^*$ lies in path with the heaviest weight from the node corresponding to this row to each critical node from $\mathcal{C}$. Indeed, if  all weights from $M_k$ are selected in the definition of $A^*$, then the result follows by definition of $M_k$. If we do not select some weights from $M_k$, this means that their corresponding rows have been filled from previous $M_k$'s or $\mathcal{C}$. 
Therefore, if we multiply the non-zero entry of a fixed row in $A^*$ by the non-zero entries of some rows in $A^*$, we get a path with the greatest product of weights from the node corresponding to this row and to any chosen critical node from $\mathcal{C}$, since $M_1\geq M_2\geq \dots\geq M_m.$ 
\end{proof}
 \begin{thm} \label{main} Let $A\in  \mathbb{R}^{n\times n}_{+}$ be an irreducible matrix with $\mu(A)=1$. Then each column of $\Delta(A)$ that is a max-eigenvector of $A$ is a principal-sunflower max-eigenvector of $A$.
 
 Moreover, all possible mutation-sunflower matrices that are associated to critical circuits from the same component of connectivity of $C(A)$ have a common scaled principal-sunflower max-eigenvector.
 \end{thm}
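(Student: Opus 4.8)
The plan is to reduce both assertions to a single structural fact: for any critical circuit $\mathcal{C}$ and any mutation-sunflower matrix $A^*$ produced from $\mathcal{C}$ by steps (2)--(4), the max-eigenvector of $A^*$ (essentially unique) is exactly a column $\Delta(A)_i$ of the Kleene star of $A$ indexed by a node $i$ of $\mathcal{C}$. Once this is established, Theorem \ref{eig_components} delivers everything else.

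First I would pin down the eigenvector of $A^*$. Since each row of $A^*$ has a single positive entry, every vertex has exactly one outgoing edge in $D(A^*)$, and by Lemma \ref{L11} the forward orbit of every vertex reaches $\mathcal{C}$; hence $\mathcal{C}$ is the \emph{only} circuit of $D(A^*)$ and $\mu(A^*)=1$. Writing the eigenequation $A^*\otimes x=x$ componentwise gives $x_k=a^*_{k\sigma(k)}\,x_{\sigma(k)}$, where $\sigma(k)$ is the unique out-neighbour of $k$; this forced recursion shows the eigenvector is unique up to a scalar, is strictly positive, and, after normalizing $x_i=1$ for a fixed $i\in\mathcal{C}$, satisfies $x_k=(\Delta(A^*))_{ki}$, the weight of the unique $k\to i$ path in $D(A^*)$ (loops around $\mathcal{C}$ contribute weight $1$).

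The crux is to identify $(\Delta(A^*))_{ki}$ with $(\Delta(A))_{ki}$ for every $k$. For $k$ not on $\mathcal{C}$ this is precisely Lemma \ref{L11}: the $k\to i$ path selected inside $A^*$ is a heaviest $k\to i$ path in $D(A)$, and since $\mu(A)=1$ the Kleene-star entry $(\Delta(A))_{ki}$ equals this heaviest-path weight. For $k$ on $\mathcal{C}$ the $A^*$-path is the arc of $\mathcal{C}$ from $k$ to $i$; here I would argue that any path $P$ from $k$ to $i$ in $D(A)$, concatenated with the complementary arc of $\mathcal{C}$ from $i$ back to $k$, is a closed walk, hence of weight at most $1$ because $\mu(A)=1$. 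As the two arcs of $\mathcal{C}$ multiply to $1$, this forces the weight of $P$ to be at most the weight of the $k\to i$ arc, so again $(\Delta(A))_{ki}$ equals the $A^*$-path weight. Thus $x=\Delta(A)_i$; since $i$ is critical this column is a max-eigenvector of $A$ by Theorem \ref{eig_components}, and being simultaneously a max-eigenvector of $A^*$ it is a principal-sunflower max-eigenvector.

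Both conclusions then follow. For the first, a column $\Delta(A)_j$ that is a max-eigenvector has $j$ critical, lying in some strongly connected component $C_s$ of $C(A)$; choosing the mutation-sunflower matrix attached to a critical circuit $\mathcal{C}\subseteq C_s$ and a node $i\in\mathcal{C}$, the previous paragraph makes $\Delta(A)_i$ a principal-sunflower max-eigenvector, while Theorem \ref{eig_components}(1) makes $\Delta(A)_j$ and $\Delta(A)_i$ scalar multiples. For the second, if $\mathcal{C},\mathcal{C}'$ lie in the same component $C_s$ with mutation-sunflower matrices $A^*,A^{**}$ and nodes $i\in\mathcal{C}$, $i'\in\mathcal{C}'$, then the scaled eigenvectors are the scalings of $\Delta(A)_i$ and $\Delta(A)_{i'}$, which coincide by Theorem \ref{eig_components}(1); this common scaled vector is the desired common scaled principal-sunflower max-eigenvector. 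I expect the main obstacle to be the identity $(\Delta(A^*))_{ki}=(\Delta(A))_{ki}$, especially the on-circuit case, which cannot invoke Lemma \ref{L11} and instead needs the closed-walk argument from $\mu(A)=1$, together with the verification that $\mathcal{C}$ is the unique circuit of $A^*$ so that its eigenvector is genuinely one-dimensional.
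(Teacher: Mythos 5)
Your argument is correct and rests on the same ingredients as the paper's own proof: Lemma \ref{L11} for the rows off the critical circuit, the weight-one property of the critical circuit for the rows on it, and Theorem \ref{eig_components} to pass to the two stated conclusions. The only real difference is the direction of the identification --- the paper starts from the column $g_i=\Delta(A)_i$ and uses $A^*\le A$ to force equality $A^*\otimes g_i=g_i$ row by row, whereas you compute the eigenvector of $A^*$ explicitly as path weights and then show it coincides with $\Delta(A)_i$; your version has the minor merit of making explicit that $\mathcal{C}$ is the unique circuit of $D(A^*)$, which is what actually underwrites the one-dimensionality of $V_{\oplus}(A^*,1)$ that the paper asserts from the one-positive-entry-per-row structure alone.
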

 \begin{proof} 
Let $g_i$ be the $i$-th column of $\Delta(A)$, where $i\in N_C(A)$, and let $r$ be the number of components of connectivity of $C(A)$ (according to construction of $\Delta(A)$, $g_{ii}=1$). Then there is $1\le s\le r$ such that $i$ is a node in a critical circuit $\mathcal{C}_s$. We show that $g_i$ is a max-eigenvector (and also a classical eigenvector) of a mutation-sunflower matrix $A_s^*$ corresponding to $\mathcal{C}_s$ (since there may be several mutation-sunflower matrices related to $\mathcal{C}_s$, $A_s^*$ could be any of them). Since $A_s^*\le A$, we have $A_s^*\otimes g_i\le A\otimes g_i=g_i$ and therefore the following inequalities hold:
\begin{eqnarray*}
a_{1 j_1} g_{\scriptstyle j_1 i} &\le & g_{1 i} \\
a_{2 j_2} g_{\scriptstyle j_2 i} &\le & g_{2 i} \\
\vdots \\
a_{n j_n} g_{\scriptstyle j_n i} &\le & g_{n i},
\end{eqnarray*}
where $a_{1 j_1}, \ldots , a_{n j_n}$ are positive entries in $A_s^*$ (recall that $A_s^*$ has only one non-zero entry in each row). 

Because some entries in $A_s^*$ have been selected from the critical circuit $\mathcal{C}_s$, let us denote the nodes in $\mathcal{C}_s$ by
\[
j_{\scriptstyle f(1)}, \dots, j_{\scriptstyle f(k)},
\]
where $k\le n$ and $f(k+1) = f(1) = i$ ($f$ is a function from $\{1, 2, \dots, k+1\}$ to $\{1, 2, \dots, n\}$). It is clear that for every $1 \le p \le k$, 
\[
a_{\scriptstyle f(p) j_{f(p)}} g_{\scriptstyle j_{f(p)} i} = g_{\scriptstyle f(p) i} 
\]
(if not, then by multiplying the left and right sides of the inequalities above that include coefficients from $\mathcal{C}_s$, one obtains $\mu(A)<1$, which is a contradiction).

Now let $a_{\scriptstyle l j_l} g_{\scriptstyle j_l i} < g_{\scriptstyle l i}$ for some $l$ which is not a node of $\mathcal{C}_s$ such that $a_{\scriptstyle l j_l}$ is a positive entry in $A_s^*$. We consider two cases;  
case 1: if $j_l = i = f(1)$, then because $g_{ii}=1$ we have $a_{\scriptstyle l i} < g_{\scriptstyle l i}$. Since each row of $A_s^*$ has only one nonzero entry, $a_{\scriptstyle l i}$ is the only nonzero entry in row $l$ of $A_s^*$. Then any product of weights corresponding to a path from node $l$ to node $i$ must contain $a_{\scriptstyle l i}$. In fact, its weight is of the form
\[
a_{\scriptstyle l i} = a_{\scriptstyle l f(1)} a_{\scriptstyle f(1) f(2)} a_{\scriptstyle f(2) f(3)} \cdots a_{\scriptstyle f(k-1) f(k)} a_{\scriptstyle f(k) f(1)}.
\]
Since $f(1), f(2), \dots, f(k)$ are nodes of $\mathcal{C}_s$, we have
\[
a_{\scriptstyle f(1) f(2)} a_{\scriptstyle f(2) f(3)} \cdots a_{\scriptstyle f(k) f(1)} = 1 \quad (\text{because } \mu(A)=1),
\]
so $a_{\scriptstyle l i}$ is the heaviest weight of paths from node $l$ to node $i$.
Case 2: if $j_l \ne i$, because $l$ is not in $\mathcal{C}_s$ and $i$ is, $A_s^*$ contains a path from $l$ to $i$ with the greatest product of weights among all paths from $l$ to $i$ in $D(A)$ by Lemma \ref{L11}. Then multiplying the left and right sides of inequalities that contain a weight from this path gives
\[
a_{\scriptstyle l j_l} a_{\scriptstyle j_l f(r_2)} a_{\scriptstyle f(r_2) f(r_3)} \cdots a_{\scriptstyle f(r_k) i} g_{\scriptstyle j_l i} g_{\scriptstyle f(r_2) i} \cdots g_{\scriptstyle f(r_k) i} g_{ii}
<
g_{\scriptstyle l i} g_{\scriptstyle j_l i} g_{\scriptstyle f(r_2) i} \cdots g_{\scriptstyle f(r_k) i},
\]
and consequently 
\[
a_{\scriptstyle l j_l} a_{\scriptstyle j_l f(1)} \cdots a_{\scriptstyle f(r_k) i} < g_{\scriptstyle l i} \quad (\text{since } g_{ii}=1).
\]
This is a contradiction in both cases, because both sides represent the heaviest weights of paths from node $l$ to node $i$ (by Lemma \ref{L11} and the fact that for $i\neq l$, the $li$-th entry of $\Delta(A)$ equals the greatest weight of an $l-i$ path). Thus $g_i$ is a max-eigenvector of $A_s^*$, hence a principal-sunflower max-eigenvector related to $A_s^*$.

Since $i \in N_C(A)$, $\mathcal{C}_s$ and $A_s^*$ were chosen arbitrarily, it follows from Theorem \ref{eig_components} that all mutation-sunflower matrices associated to critical circuits from the same component of $C(A)$ have a common scaled principal-sunflower max-eigenvector.
\end{proof}

Applying the above result to the matrix $\frac{1}{\mu(A)}A$ we obtain the following result.
\begin{cor}\label{essential P1} Let $A\in  \mathbb{R}^{n\times n}_{+}$ be an irreducible matrix. Then 
a basis of  $V_{\oplus}(A, \mu(A))$ is formed by principal-sunflower max-eigenvectors of $A$.
\end{cor}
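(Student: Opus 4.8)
The plan is to reduce immediately to the normalized case $\mu(A)=1$ and then combine Theorem~\ref{eig_components} with Theorem~\ref{main}. First I would set $A'=\frac{1}{\mu(A)}A$, so that $\mu(A')=1$. Scaling $A$ by a positive constant leaves its digraph, its critical circuits, and hence the components of connectivity of its critical graph unchanged (so $C(A')=C(A)$), and it scales every circuit geometric mean by the same factor, so exactly the same circuits remain critical. Consequently the mutation-sunflower matrices of $A'$ are precisely $\frac{1}{\mu(A)}$ times the matrices produced by Steps (1)--(4) for $A$, and since multiplying a matrix by a positive scalar does not change its max-eigenvectors (only its max-eigenvalue), the principal-sunflower max-eigenvectors of $A'$ coincide with those of $A$. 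Moreover $V_{\oplus}(A,\mu(A))=V_{\oplus}(A',1)$, because $A\otimes x=\mu(A)x$ if and only if $A'\otimes x=x$. This reduces the statement to proving that a basis of $V_{\oplus}(A',1)$ is formed by principal-sunflower max-eigenvectors of $A'$.

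Next I would invoke Theorem~\ref{eig_components} applied to $A'$. Letting $r$ be the number of strongly connected components $C_1,\dots,C_r$ of $C(A')$, part (3) of that theorem furnishes a basis $\{g_{i_1},\dots,g_{i_r}\}$ of $V_{\oplus}(A',1)$, where $g_{i_s}=\Delta(A')_{i_s}$ is one column of the Kleene star chosen for each component (with $i_s\in N_{C_s}$), and each $g_{i_s}$ is a max-eigenvector of $A'$.

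Then I would apply Theorem~\ref{main}: each such column $g_{i_s}$ is a scalar multiple of a principal-sunflower max-eigenvector $x_s$ of $A'$, namely the unique scaled max-eigenvector of a mutation-sunflower matrix $A_s^*$ associated, via Steps (1)--(4), to a critical circuit lying in $C_s$. Writing $g_{i_s}=\alpha_s x_s$ with $\alpha_s>0$, the set $\{x_1,\dots,x_r\}$ spans the same max-cone as $\{g_{i_1},\dots,g_{i_r}\}$, namely all of $V_{\oplus}(A',1)$, so it is a set of generators. It is also independent: if some $x_1$ were a max-combination $\bigoplus_{s\ge 2}\beta_s x_s$, then $g_{i_1}=\alpha_1 x_1=\bigoplus_{s\ge 2}(\alpha_1\beta_s/\alpha_s)g_{i_s}$ would exhibit $g_{i_1}$ as a max-combination of the remaining columns, contradicting the independence of $\{g_{i_1},\dots,g_{i_r}\}$. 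Hence $\{x_1,\dots,x_r\}$ is a basis of $V_{\oplus}(A',1)$ consisting of principal-sunflower max-eigenvectors, and by the first paragraph these are precisely principal-sunflower max-eigenvectors of $A$ forming a basis of $V_{\oplus}(A,\mu(A))$.

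The only genuinely delicate point, the identification of each basis column of $\Delta$ with a single principal-sunflower max-eigenvector, has already been carried out in Theorem~\ref{main}; the main obstacle here is therefore bookkeeping rather than analysis. Specifically, I expect the care to lie in verifying that normalization by $\mu(A)$ transports the entire construction faithfully between $A$ and $A'$ (the critical circuits, the ordering $M_1\ge\cdots\ge M_m$, the mutation-sunflower matrices, and their eigenvectors), and in recording that passing from an independent generating set to positive scalar multiples of its members again yields a basis.
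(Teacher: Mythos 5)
Your proposal is correct and follows essentially the same route as the paper, which proves the corollary simply by applying Theorem~\ref{main} (together with Theorem~\ref{eig_components}) to the normalized matrix $\frac{1}{\mu(A)}A$. You merely make explicit the bookkeeping that the paper leaves implicit, namely that normalization preserves the critical structure, the mutation-sunflower construction and the eigenvectors, and that replacing basis columns of the Kleene star by positive scalar multiples again yields a basis.
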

\bigskip

We can use the following algorithm to calculate the basis $X$ of the principal max-eigencone of $A$. 
\bigskip

\noindent{\bf Algorithm:} Input: 
let $r$ be the number of strongly connected components of $C(A)$ and let $A^{'}=\frac{1}{\mu(A)}A$.

$X_0 := \emptyset .$

For $i= 1, \ldots , r$:

\indent \indent  Calculate $v_i \in V((A^{'} )_ i ^*, 1)$ and define $X_{i}:= X_{i-1} \cup \{v_i\}$

\indent \indent If $i=r$, stop and return $X=X_i.$
\bigskip

\begin{rem} 
The set $X$ calculated by the above algorithm  is a basis of the principal max-eigencone of $A$ by Corollary \ref{essential P1} and Theorem \ref{eig_components}.
\end{rem}
\medskip

We illustrate our method with the following examples.

\begin{exam}\label{E218}
Let
\[
A=\begin{pmatrix}
1 & 2 & 1\\ 2 & 2 & 1 \\ 1 & 1 & 2
\end{pmatrix}.
\]
Then $r=2$ and
 we have 
\[
\mu(A)=\sqrt{a_{12}a_{21}}=a_{22}=a_{33}=2,\]

\[
A^{'}=\begin{pmatrix}
\frac{1}{2} & 1 & \frac{1}{2}\\ 1 & 1 & \frac{1}{2} \\ \frac{1}{2} & \frac{1}{2} & 1
\end{pmatrix}.
\]
We first consider the critical circuit $(1,2), (2,1)$. Then
\[
M_1=\frac{a_{23}}{2}\frac{a_{31}}{2}\frac{a_{12}}{2}=M_2=\frac{a_{23}}{2}\frac{a_{32}}{2}=M_3=\frac{a_{31}}{2}\frac{a_{13}}{2}=\frac{1}{4}\,.
\]
There are two possible mutation-sunflower matrices corresponding to the critical circuit $(1,2), (2,1)$ are 
\[
A_{1_1}^{*}=\begin{pmatrix}
0 & a_{12} & 0\\ a_{21} & 0 & 0 \\ a_{31} & 0 & 0
\end{pmatrix}=
\begin{pmatrix}
0 & 2 & 0\\ 2 & 0 & 0 \\ 1 & 0 & 0
\end{pmatrix}, \quad
A_{1_2}^{*}=\begin{pmatrix}
0 & a_{12} & 0\\ a_{21} & 0 & 0 \\ 0 & a_{32} & 0
\end{pmatrix}=
\begin{pmatrix}
0 & 2 & 0\\ 2 & 0 & 0 \\ 0 & 1 & 0
\end{pmatrix},
\]
If we choose instead a critical circuit $(2,2)$ (which is nondisjoint from  $(1,2), (2,1)$), we would obtain the alternative mutation-sunflower matrices:
\[
A_{1_3}^{*} =\begin{pmatrix}
0 & a_{12} & 0\\ 0 & a_{22} & 0 \\ a_{31} & 0 & 0
\end{pmatrix}=
\begin{pmatrix}
0 & 2 & 0\\ 0 & 2 & 0 \\ 1 & 0 & 0
\end{pmatrix}, \quad
A_{1_4}^{*} = \begin{pmatrix}
0 & a_{12} & 0\\ 0 & a_{22} & 0 \\ 0 & a_{32} & 0
\end{pmatrix}=
\begin{pmatrix}
0 & 2 & 0\\ 0 & 2 & 0 \\ 0 & 1 & 0
\end{pmatrix}.
\]
Any of the matrices $A_{1_1}^{*}, A_{1_2}^{*}, A_{1_3}^{*}, A_{1_4}^{*}$ can be chosen for $A_{1}^{*}$ and their corresponding principal-sunflower max-eigenvector of $A$ is

\[
x_1=\begin{pmatrix}
2 \\2\\1
\end{pmatrix}. 
\]
Also, by considering a critical circuit $(3,3)$ 
we obtain
\[
A_2^{*}=\begin{pmatrix}
0 & a_{12} & 0\\ 0 & 0 & a_{23} \\ 0 & 0 & a_{33}
\end{pmatrix}=
\begin{pmatrix}
0 & 2 & 0\\ 0 & 0 & 1 \\ 0 & 0 & 2
\end{pmatrix}.
\]
The corresponding principal-sunflower max-eigenvector of $A$

\[
x_2=\begin{pmatrix}
1 \\1\\2
\end{pmatrix}. 
\]
Thus, $X=\{x_1, x_2\}$ is a basis of the principal max-cone $V_{\oplus}(A,2)$. 
Note also that for $A^{'}=\frac{1}{2}A$  
we have
\[
\Delta(A^{'})=I \oplus A^{'} \oplus (A^{'})^2 _{\otimes}  =\begin{pmatrix}
1 & 1 & \frac{1}{2}\\ 1 & 1 & \frac{1}{2} \\ \frac{1}{2}& \frac{1}{2} & 1
\end{pmatrix}.
\]
The first two columns of $\Delta(A^{'})$ are multiples of $x_1$ and the third column is a multiple of $x_2$.
\end{exam}

\begin{rem}  In the above example $x=(1, 1, 1)^T$ is a max-eigenvector of $A$ corresponding to $\mu(A)=2$. 
It holds $x=\frac{1}{2}(x_1\oplus x_2)$, where $x_1$ and $ x_2$ are the above principal-sunflower max-eigenvectors of $A$.
\end{rem}

\begin{exam} 
Let
\[
A=\begin{pmatrix}
1& 3& 4& 1 \\ 
1&2&0&1\\
4&1&1&3\\
5&2&1&2
\end{pmatrix}, \quad
A^{'}=\begin{pmatrix}
\frac{1}{4}& \frac{3}{4}& 1& \frac{1}{4} \\  \\
\frac{1}{4}&\frac{2}{4}&0&\frac{1}{4}\\ \\
1&\frac{1}{4}&\frac{1}{4}&\frac{3}{4}\\ \\
\frac{5}{4}&\frac{2}{4}&\frac{1}{4}&\frac{2}{4}
\end{pmatrix}
\]
Then $r=1$ and
\[\mu (A) =\sqrt{a_{13}a_{31}}=4, \quad M_1=\frac{a_{41}}{4}\frac{a_{14}}{4}=\frac{5}{16}> M_2=\frac{a_{23}}{4}\frac{a_{13}}{4}=\frac{3}{16}\,.
\]
We obtain
\[
{A^* _1}=\begin{pmatrix}
0& 0& 4& 0 \\ 
0&0&0&1\\
4&0&0&0\\
5&0&0&0
\end{pmatrix}.
\]
The corresponding principal-sunflower max-eigenvector of $A$ is
\[
x=\begin{pmatrix}
4 \\ \frac{5}{4}\\4\\5
\end{pmatrix} . 
\]
For $A^{'}$ 
we have
\[
\Delta(A^{'})=\begin{pmatrix}
 1 & \frac{3}{4} & 1 & \frac{3}{4} \\ \\
 \frac{5}{16} & 1 & \frac{5}{16} & \frac{1}{4} \\ \\
 1 & \frac{3}{4} & 1 & \frac{3}{4} \\ \\
 \frac{5}{4} & \frac{15}{16} & \frac{5}{4} & 1 \\ 
\end{pmatrix}.
\]
The first and third columns of $\Delta(A^{'})$ are multiples of $x$.
\end{exam}
\begin{exam}\label{big}
Let
\setcounter{MaxMatrixCols}{15}
\begin{equation*} 
A=\begin{pmatrix}
3 & 4 & 3 & 1 & 0 & 1 & 1 & 1 & 1 & 0 & 1 & 1 & 1 & 0 & 0\\
0 & 2 & 4 & 0 & 1 & 1 & 0 & 0 & 0 & 1 & 0 & 0 &1 & 1 & 0\\
0 & 0 & 1 & 1 & 4 & 0 & 0 & 1 & 0 & 0 & 0 & 2 & 0 & 0 & 0\\
1 & 0 & 0 & 0 & 1 & 1 & 1 & 1 & 0 & 0 & 0 & 0 & 0 & 1 & 0\\
4 & 0 & 1 & 0 & 3 & 1 & 0 & 0 & 1 & 0 & 0 & 0 & 0 & 0 & 0\\
1 & 1 & 2 & 0 & 0 & 1 & 4 & 0 & 0 & 0 & 0 & 1 & 0 & 0 & 0\\
0 & 0 & 0 & 1 & 0 & 0 & 2 & 4 & 1 & 0 & 0 & 0 & 0 & 0 & 3\\
1 & 0 & 0 & 1 & 0 & 0 & 0 & 2 & 4 & 0 & 0 & 0 & 0 & 0 & 0\\
0 & 0 & 0 & 0 & 0 & 0 & 0 & 0 & 3 & 4 & 0 & 0 & 1 & 1 & 1\\
1 & 0 & 0 & 0 & 0 & 0 & 1 & 1 & 0 & 1 & 4 & 0 & 0 & 0 & 0\\
0 & 0 & 1 & 1 & 0 & 4 & 1 & 1 & 0 & 0 & 1 & 1 & 0 & 0 & 0\\
0 & 0 & 0 & 1 & 1 & 1 & 0 & 0 & 0 & 0 & 0 & 3 & 1 & 0 & 4\\
1 & 1 & 0 & 0 & 0 & 0 & 0 & 0 & 0 & 1 & 0 & 4 & 2 & 1 & 0\\
1 & 1 & 0 & 1 & 0 & 0 & 0 & 0 & 0 & 0 & 0 & 1 & 0 & 1 & 2\\
2 & 0 & 0 & 1 & 0 & 0 & 1 & 0 & 1 & 0 & 0 & 0 & 4 & 1 & 3\\
\end{pmatrix} .
\end{equation*}
 Then $r=3$ and 
\[\mu(A)=({a_{12}a_{23}a_{35}a_{51}})^{\frac{1}{4}}= ({a_{6,7}a_{7,8}a_{8,9}a_{9,10}a_{10,11}a_{11,6}})^{\frac{1}{6}}= ({a_{12,15}a_{15,13}a_{13,12}})^{\frac{1}{3}}=4\]

By choosing a critical circuit $(1,2), (2,3), (3,5), (5,1)$ we obtain
\[ 
A_1^{*}=\begin{pmatrix}
0 & 4 & 0 & 0 & 0 & 0 & 0 & 0 & 0 & 0 & 0 & 0 & 0 & 0 & 0\\
0 & 0 & 4 & 0 & 0 & 0 & 0 & 0 & 0 & 0 & 0 & 0 & 0 & 0 & 0\\
0 & 0 & 0 & 0 & 4 & 0 & 0 & 0 & 0 & 0 & 0 & 0 & 0 & 0 & 0\\
0 & 0 & 0 & 0 & 1 & 0 & 0 & 0 & 0 & 0 & 0 & 0 & 0 & 0 & 0\\
4 & 0 & 0 & 0 & 0 & 0 & 0 & 0 & 0 & 0 & 0 & 0 & 0 & 0 & 0\\
0 & 0 & 0 & 0 & 0 & 0 & 4 & 0 & 0 & 0 & 0 & 0 & 0 & 0 & 0\\
0 & 0 & 0 & 0 & 0 & 0 & 0 & 4 & 0 & 0 & 0 & 0 & 0 & 0 & 0\\
0 & 0 & 0 & 0 & 0 & 0 & 0 & 0 & 4 & 0 & 0 & 0 & 0 & 0 & 0\\
0 & 0 & 0 & 0 & 0 & 0 & 0 & 0 & 0 & 4 & 0 & 0 & 0 & 0 & 0\\
0 & 0 & 0 & 0 & 0 & 0 & 0 & 0 & 0 & 0 & 4 & 0 & 0 & 0 & 0\\
0 & 0 & 0 & 0 & 0 & 4 & 0 & 0 & 0 & 0 & 0 & 0 & 0 & 0 & 0\\
0 & 0 & 0 & 0 & 0 & 0 & 0 & 0 & 0 & 0 & 0 & 0 & 0 & 0 & 4\\
0 & 0 & 0 & 0 & 0 & 0 & 0 & 0 & 0 & 0 & 0 & 4 & 0 & 0 & 0\\
0 & 0 & 0 & 0 & 0 & 0 & 0 & 0 & 0 & 0 & 0 & 0 & 0 & 0 & 2\\
2 & 0 & 0 & 0 & 0 & 0 & 0 & 0 & 0 & 0 & 0 & 0 & 4 & 0 & 0\\
\end{pmatrix} .
\]
 The corresponding principal-sunflower max-eigenvector of $A$ is
\[
x_1=\begin{pmatrix}
1&1&1&\frac{1}{4}&1&\frac{1}{2}&\frac{1}{2}&\frac{1}{2}&\frac{1}{2}&\frac{1}{2}&\frac{1}{2}&\frac{1}{2}&\frac{1}{2}&\frac{1}{4}&\frac{1}{2}
\end{pmatrix}^T.
\]
By choosing a critical circuit $(6,7), (7,8), (8,9), (9,10), (10,11), (11,6)$ it follows
\[ 
A_2^{*}=\begin{pmatrix}
0 & 4 & 0 & 0 & 0 & 0 & 0 & 0 & 0 & 0 & 0 & 0 & 0 & 0 & 0\\
0 & 0 & 4 & 0 & 0 & 0 & 0 & 0 & 0 & 0 & 0 & 0 & 0 & 0 & 0\\
0 & 0 & 0 & 0 & 4 & 0 & 0 & 0 & 0 & 0 & 0 & 0 & 0 & 0 & 0\\
0 & 0 & 0 & 0 & 0 & 1& 0 & 0 & 0 & 0 & 0 & 0 & 0 & 0 & 0\\
4 & 0 & 0 & 0 & 0 & 0 & 0 & 0 & 0 & 0 & 0 & 0 & 0 & 0 & 0\\
0 & 0 & 0 & 0 & 0 & 0 & 4 & 0 & 0 & 0 & 0 & 0 & 0 & 0 & 0\\
0 & 0 & 0 & 0 & 0 & 0 & 0 & 4 & 0 & 0 & 0 & 0 & 0 & 0 & 0\\
0 & 0 & 0 & 0 & 0 & 0 & 0 & 0 & 4 & 0 & 0 & 0 & 0 & 0 & 0\\
0 & 0 & 0 & 0 & 0 & 0 & 0 & 0 &  0 & 4 & 0 & 0 & 0 & 0 & 0\\
0 & 0 & 0 & 0 & 0 & 0 & 0 & 0 & 0 & 0 & 4 & 0 & 0 & 0 & 0\\
0 & 0 & 0 & 0 & 0 & 4 & 0 & 0 & 0 & 0 & 0 & 0 & 0 & 0 & 0\\
0 & 0 & 0 & 0 & 0 & 0 & 0 & 0 & 0 & 0 & 0 & 0 & 0 & 0 &4\\
0 & 0 & 0 & 0 & 0 & 0 & 0 & 0 & 0 & 0 & 0 & 4 & 0 &0 & 0\\
0 & 0 & 0 & 0 & 0 & 0 & 0 & 0 & 0 & 0 & 0 & 0 & 0 & 0 & 2\\
0 & 0 & 0 & 0 & 0 & 0 & 0 & 0 & 0 & 0 & 0 & 0 & 4 & 0 & 0\\
\end{pmatrix} .
\]
The corresponding principal-sunflower max-eigenvector of $A$

\[
x_2=\begin{pmatrix}
\frac{1}{4}&\frac{1}{4}&\frac{1}{4}&\frac{1}{4}&\frac{1}{4}&1&1&1&1&1&1&\frac{1}{4}&\frac{1}{4}&\frac{1}{8}&\frac{1}{4}
\end{pmatrix}^T.
\]
By taking a critical circuit $(12,15), (15,13), (13,12)$ we obtain
\[ 
A_3^{*}=\begin{pmatrix}
0 & 4 & 0 & 0 & 0 & 0 & 0 & 0 & 0 & 0 & 0 & 0 & 0 & 0 & 0\\
0 & 0 & 4 & 0 & 0 & 0 & 0 & 0 & 0 & 0 & 0 & 0 & 0 & 0 & 0\\
0 & 0 & 0 & 0 & 4 & 0 & 0 & 0 & 0 & 0 & 0 & 0 & 0 & 0 & 0\\
0 & 0 & 0 & 0 & 0 & 0 & 1 & 0 & 0 & 0 & 0 & 0 & 0 & 0 & 0\\
4 & 0 & 0 & 0 & 0 & 0 & 0 & 0 & 0 & 0 & 0 & 0 & 0 & 0 & 0\\
0 & 0 & 0 & 0 & 0 & 0 & 4 & 0 & 0 & 0 & 0 & 0 & 0 & 0 & 0\\
0 & 0 & 0 & 0 & 0 & 0 & 0 & 4 & 0 & 0 & 0 & 0 & 0 & 0 & 0\\
0 & 0 & 0 & 0 & 0 & 0 & 0 & 0 & 4 & 0 & 0 & 0 & 0 & 0 & 0\\
0 & 0 & 0 & 0 & 0 & 0 & 0 & 0 & 0 & 4 & 0 & 0 & 0 & 0 & 0\\
0 & 0 & 0 & 0 & 0 & 0 & 0 & 0 & 0 & 0 & 4 & 0 & 0 & 0 & 0\\
0 & 0 & 0 & 0 & 0 & 4 & 0 & 0 & 0 & 0 & 0 & 0 & 0 & 0 & 0\\
0 & 0 & 0 & 0 & 0 & 0 & 0 & 0 & 0 & 0 & 0 & 0 & 0 & 0 & 4\\
0 & 0 & 0 & 0 & 0 & 0 & 0 & 0 & 0 & 0 & 0 & 4 & 0 & 0 & 0\\
0 & 0 & 0 & 0 & 0 & 0 & 0 & 0 & 0 & 0 & 0 & 0 & 0 & 0 & 2\\
0 & 0 & 0 & 0 & 0 & 0 & 0 & 0 & 0 & 0 & 0 & 0 & 4 & 0 & 0\\
\end{pmatrix} .
\]
The corresponding principal-sunflower max-eigenvector of $A$
\[
x_3=\begin{pmatrix}
\frac{1}{2}&\frac{1}{2}&\frac{1}{2}&\frac{3}{16}&\frac{1}{2}&\frac{3}{4}&\frac{3}{4}&\frac{3}{4}&\frac{3}{4}&\frac{3}{4}&\frac{3}{4}&1&1&\frac{1}{2}&1
\end{pmatrix}^T.
\]
The set $\{x_1 ,x_2, x_3\}$ is a basis of $V_{\oplus} (A,4)$. 
For $A^{'}=\frac{1}{4}A$ and after quite some calculation one can calculate $\Delta(A^{'})$ (see Appendix). The first three columns and column five of $\Delta(A^{'})$ are multiples of $x_1$, columns 6, 7, 8, 9, 10, 11 of $\Delta(A^{'})$ are multiples of $x_2$  and columns 12, 13, 15 of $\Delta(A^{'})$ are multiples of $x_3$.
\end{exam}
\medskip

\noindent{\bf  Acknowledgements}\label{S:acknowledgement}

The authors thank the reviewers and editor for their very useful suggestions and corrections, which considerably improved the quality of the article. 

This work was supported in part by the Department of Mathematical Sciences at
Isfahan University of Technology, Iran and  by  the Slovenian Research Agency (grants P1-0222, J1-8133, J1-8155, J2-2512 and N1-0071).

\section*{Declarations}

The authors declare that they have no conflict of interests.

\begin{appendices}

\section{}\label{secA1}
All circuits of matrix A in Example \ref{ESM} are given in \cite{BCOQ}. So we have 
\[a_{13}a_{32}a_{21}=2\times 2\times 2\]
\[a_{13}a_{39}a_{92}a_{21}=2\times 2\times 2\times 2\]
\[a_{32}a_{28}a_{81}a_{13}=2\times 4\times 2\times 2\]
\[a_{32}a_{28}a_{81}a_{17}a_{73}=2\times 4\times 2\times 2\times 2\]
\[a_{32}a_{28}a_{86}a_{69}a_{94}a_{47}a_{73}=2\times 4\times 4\times 2\times 4\times 2\times 2\]
\[a_{17}a_{73}a_{32}a_{21}=2\times 2\times 2\times 2\]
\[a_{45}a_{56}a_{64}=4\times 2\times 2\]
\[a_{45}a_{56}a_{69}a_{94}=4\times 4\times 2\times 4\]
\[a_{47}a_{75}a_{58}a_{86}a_{64}=2\times 4\times 4\times 4\times 4\]
\[a_{58}a_{86}a_{64}a_{45}=4\times 4\times 4\times 4\]
\[a_{39}a_{92}a_{28}a_{81}a_{13}=2\times 2\times 4\times 2\times 2\]
\[a_{75}a_{56}a_{69}a_{92}a_{28}a_{81}a_{17}=4\times 4\times 2\times 2\times 4\times 2\times 2\]
\[a_{75}a_{58}a_{86}a_{69}a_{94}a_{47}=4\times 4\times 4\times 2\times 2\times 2\]

Therefore $\mu(A)=\sqrt[4]{a_{58}a_{86}a_{64}a_{45}}=4$ and
\[
M_1=a_{58}a_{86}a_{64}a_{45}>M_2=a_{47}a_{75}a_{58}a_{86}a_{64}>M_3=a_{45}a_{56}a_{69}a_{94}\]\[
>M_4=a_{75}a_{58}a_{86}a_{69}a_{94}a_{47}>M_5=a_{75}a_{56}a_{69}a_{92}a_{28}a_{81}a_{17}\]\[=M_6=a_{32}a_{28}a_{86}a_{69}a_{94}a_{47}a_{73}
\] which implies
\[
A^*=\begin{pmatrix}
 0 & 0 & 0 & 0 &0 &0 &2  & 0 & 0 \\ 
 0& 0  & 0 & 0 &0 & 0 & 0 &4 & 0 \\ 
 0& 2  & 0 & 0 &0 & 0 &0 & 0 & 0 \\ 
 0& 0 & 0 & 0 & 4 &0 & 0  &0 & 0 \\ 
 0& 0 & 0 & 0 & 0& 0 & 0 & 4 & 0 \\ 
 0& 0 & 0 & 4  & 0 & 0 &0  & 0 & 0\\ 
  0& 0 & 0 & 0 & 4 & 0& 0& 0 & 0 \\ 
0 & 0 &0  & 0 & 0 & 4& 0& 0& 0 \\ 
 0& 0 & 0 & 4 &0 & 0& 0 & 0 & 0 \\ 
\end{pmatrix}
\]
Thus max eigenvector is $x=(\frac{1}{2}, 1, \frac{1}{2}, 1, 1, 1, 1, 1, 1)^T$

\bigskip

\noindent{\bf Karp's algorithm in max-plus algebra:} 

1)Choose arbitrary $j\in \{1,\dots,n\}$ and set $x(0)=e_j$, where $e_j$ is the standard vector in max-plus .

2)Compute $x(k)$ for $k=0,\dots,n$.

3)Compute a max-plus eigenvalue
\[\lambda=\underset{i=1,\dots,n}{\max} ~~\underset{k=0,\dots,n-1}{\min}\dfrac{x_i(n)-x_i(k)}{n-k}\]

 For matrix $A$ in our example, first we need to convert it in max-plus algebra
 as follows
 \[A_1=\begin{pmatrix}
 \ln1 & -\infty & \ln 2 & -\infty &-\infty &-\infty &\ln 2  & -\infty & -\infty \\ 
 \ln 2 & \ln 1  &  -\infty &  -\infty & -\infty &  -\infty &  -\infty &\ln 4 &  -\infty \\ 
 -\infty & \ln 2  & \ln  & -\infty & -\infty & -\infty & -\infty & -\infty & \ln 2 \\ 
 -\infty& -\infty & -\infty & \ln 1 & \ln 4 &-\infty & \ln 2  &-\infty & -\infty \\ 
 -\infty & -\infty & -\infty & -\infty & \ln 1& \ln 4 & -\infty & \ln 4 & -\infty \\ 
 -\infty & -\infty & -\infty & \ln4  & -\infty & \ln1 & -\infty   & -\infty & \ln 2\\ 
 -\infty & -\infty & \ln 2 & -\infty & \ln 4 & -\infty & \ln 2& -\infty  & -\infty \\ 
\ln 2 & -\infty & -\infty   & -\infty & -\infty & \ln 4&  -\infty & \ln 4& -\infty \\ 
 -\infty & \ln 2 &  -\infty & \ln 4 & -\infty &  -\infty &  -\infty & -\infty & \ln 2 \\ 
\end{pmatrix}
\]
Consider $x(0)=(0,-\infty, -\infty,-\infty,-\infty,-\infty,-\infty,-\infty,-\infty)^T$
\[x(1)=A_1\otimes x(0)=
\begin{pmatrix}
 \ln1 & -\infty & \ln2 & -\infty &-\infty &-\infty &\ln2  & -\infty & -\infty \\ 
 \ln 2 & \ln 1  &  -\infty &  -\infty & -\infty &  -\infty &  -\infty &\ln 4 &  -\infty \\ 
 -\infty & \ln 2  & \ln  & -\infty & -\infty & -\infty & -\infty & -\infty & \ln 2 \\ 
 -\infty& -\infty & -\infty & \ln 1 & \ln 4 &-\infty & \ln 2  &-\infty & -\infty \\ 
 -\infty & -\infty & -\infty & -\infty & \ln 1& \ln 4 & -\infty & \ln 4 & -\infty \\ 
 -\infty & -\infty & -\infty & \ln4  & -\infty & \ln1 & -\infty   & -\infty & \ln 2\\ 
 -\infty & -\infty & \ln 2 & -\infty & \ln 4 & -\infty & \ln 2& -\infty  & -\infty \\ 
\ln 2 & -\infty & -\infty   & -\infty & -\infty & \ln 4&  -\infty & \ln 4& -\infty \\ 
 -\infty & \ln 2 &  -\infty & \ln 4 & -\infty &  -\infty &  -\infty & -\infty & \ln 2 \\ 
\end{pmatrix}
\otimes
\begin{bmatrix}
0 \\ -\infty \\ -\infty \\ -\infty  \\-\infty \\ -\infty \\ -\infty \\ -\infty \\ -\infty 
\end{bmatrix}
\]\[=
\begin{bmatrix}
\ln1 \\ \ln2 \\ -\infty \\ -\infty  \\-\infty \\ -\infty \\ -\infty \\ \ln2\\ -\infty 
\end{bmatrix}
\]

\[x(2)=A_1\otimes x(1)=
\begin{bmatrix}
\ln1 \\ \ln8 \\ \ln4\\-\infty \\ \ln8  \\ -\infty \\ -\infty \\  \ln8\\ \ln4 
\end{bmatrix}.
\]
With similar calculation we have
\[
x(0)=\begin{bmatrix}
0 \\ -\infty \\ -\infty \\ -\infty  \\-\infty \\ -\infty \\ -\infty \\ -\infty \\ -\infty 
\end{bmatrix},
x(1)=\begin{bmatrix}
\ln1 \\ \ln2 \\ -\infty \\ -\infty  \\-\infty \\ -\infty \\ -\infty \\ \ln2\\ -\infty 
\end{bmatrix}
,
x(2)=
\begin{bmatrix}
\ln1 \\ \ln8 \\ \ln4\\-\infty \\ \ln8  \\ -\infty \\ -\infty \\ \ln8\\ \ln4 
\end{bmatrix}
,
x(3)=\begin{bmatrix}
\ln8  \\ \ln32\\ \ln16 \\ \ln32  \\ \ln32 \\ \ln8 \\ \ln32\\ \ln32\\ \ln16
\end{bmatrix}
,
x(4)=\begin{bmatrix}
\ln16  \\ \ln128\\ \ln64 \\ \ln128  \\  \ln128\\ \ln128 \\ \ln128\\ \ln128\\ \ln128\\ \ln128
\end{bmatrix}
\]
\[
x(5)=\begin{bmatrix}
\ln 256 \\ \ln 512\\ \ln256 \\ \ln512 \\ \ln512\\ \ln512 \\ \ln512\\ \ln512\\ \ln512\\ \ln512
\end{bmatrix}
,
x(6)=\begin{bmatrix}
\ln1024 \\ \ln2048\\ \ln1024 \\ \ln2048 \\  \ln2048\\  \ln2048 \\  \ln2048 \\ \ln2048 \\ \ln2048 \\ \ln2048 
\end{bmatrix}
,
x(7)=\begin{bmatrix}
\ln4096 \\ \ln8192\\ \ln4096 \\ \ln8192 \\  \ln8192\\ \ln8192 \\ \ln8192 \\ \ln8192 \\ \ln8192 \\ \ln8192
\end{bmatrix}
\]\[
x(8)=\begin{bmatrix}
\ln16384 \\ \ln32768\\ \ln16384 \\ \ln32768 \\ \ln32768\\ \ln32768 \\ \ln32768\\ \ln32768 \\ \ln32768 \\ \ln32768
\end{bmatrix}
,
x(9)=\begin{bmatrix}
\ln32768 \\ \ln131072\\ \ln32768 \\ \ln131072 \\ \ln131072\\ \ln131072 \\ \ln131072\\ \ln131072 \\ \ln131072 \\ \ln131072
\end{bmatrix}
\]

\[\frac{x_i(9)-x_i(k)}{9-k},~~~
\mbox{for}~~i=1,2,3,\dots,9~~ \mbox{and}~~ k=0,1,2,\dots,8.\] 
If $i=1$, then
\[\min\left\{\dfrac{\ln32768-\ln1}{9-0},\dfrac{\ln32768-\ln1}{9-1},\dots,\dfrac{\ln32768-\ln16384}{9-8}\right\}=\ln2.\]

If $i=2$, then 

\[ \min \left\{\dfrac{\ln131072}{9-0},\dfrac{\ln65536}{8},\dots,\dfrac{\ln4}{1}\right\}=\dfrac{\ln4}{1}.\]

If $i=3$, then 

\[ \min \left\{ \dfrac{\ln3768}{9},\dfrac{\ln3768}{8},\dots,\dfrac{\ln2}{1}\right\}=\dfrac{\ln2}{1}.\]

 If $i=4$, then

\[ \min \left\{\dfrac{\ln131072}{9},\dfrac{\ln131072}{8},\dots,\dfrac{\ln16}{1} \right\}=\dfrac{\ln4}{1}. \]

If  $i=5$, then
\[ \min \left\{\dfrac{\ln131072}{9},\dots,\dfrac{\ln4}{1}\right\}=\dfrac{\ln4}{1}\]

With similar calculation for $i=6, 7, 8, 9$ the minimum will be equal to $\ln 4$.

Therefore
\[\max\{\ln2, \ln4,\ln2, \ln4, \ln4, \ln4, \ln4, \ln4, \ln4\}=\ln4,\]
which implies $\mu(A)=e^{\ln4}=4$.

By observing that $x(k + 1) =( \ln 4) . x(k)$ for $k \ge 4$, we can conclude that there is
no need to continue our calculations further. After this step, all subsequent values
of $x(k)$ will be equal to $(\ln 4)^{k-4} \otimes x(4)$ for $5 \le k \le 9$. Consequently, 
$x(4)$ is the corresponding maximum eigenvector for $A_1$.
\bigskip

\noindent{\bf Power method:}
\[
A=\begin{pmatrix}
 1 & 0 & 2 & 0 &0 &0 &2  & 0 & 0 \\ 
 2 & 1  & 0 & 0 &0 & 0 & 0 &4 & 0 \\ 
 0& 2  & 1 & 0 &0 & 0 &0 & 0 & 2 \\ 
 0& 0 & 0 & 1 & 4 &0 & 2  &0 & 0 \\ 
 0& 0 & 0 & 0 & 1& 4 & 0 & 4 & 0 \\ 
 0& 0 & 0 & 4  & 0 & 1 &0  & 0 & 2\\ 
  0& 0 & 2 & 0 & 4 & 0& 2& 0 & 0 \\ 
2 & 0 &0  & 0 & 0 & 4& 0& 4& 0 \\ 
 0& 2 & 0 & 4 &0 & 0& 0 & 0 & 2 \\ 
\end{pmatrix}
\]

 If $j=1$, we choose the vector $x(0)=(1,0,0,0,0,0,0,0,0)$.
Now we calculate the vectors $x(k)$ based on the following relationship.
\[x(k+1)=A\otimes x(k)\]

If $k=0$ then 
\[x(0+1)=A\otimes x(0)\]
\[x(1)= \begin{pmatrix}
 1 & 0 & 2 & 0 &0 &0 &2  & 0 & 0 \\ 
 2 & 1  & 0 & 0 &0 & 0 & 0 &4 & 0 \\ 
 0& 2  & 1 & 0 &0 & 0 &0 & 0 & 2 \\ 
 0& 0 & 0 & 1 & 4 &0 & 2  &0 & 0 \\ 
 0& 0 & 0 & 0 & 1& 4 & 0 & 4 & 0 \\ 
 0& 0 & 0 & 4  & 0 & 1 &0  & 0 & 2\\ 
  0& 0 & 2 & 0 & 4 & 0& 2& 0 & 0 \\ 
2 & 0 &0  & 0 & 0 & 4& 0& 4& 0 \\ 
 0& 2 & 0 & 4 &0 & 0& 0 & 0 & 2 \\ 
\end{pmatrix}
\otimes\begin{pmatrix}
1 \\
0\\0\\0\\0\\0\\0\\0\\0
\end{pmatrix}= \begin{pmatrix}
1 \\
2\\0\\0\\0\\0\\0\\2\\0
\end{pmatrix}
\]

If $k=1$ then 
\[x(1+1)=A\otimes x(1)\]
\[x(2)=  \begin{pmatrix}
 1 \\ 8\\4\\0\\8\\0\\0\\8\\4
\end{pmatrix}
\]

If $k=2$ then 
\[x(2+1)=A\otimes x(2)\]
\[x(3)= \begin{pmatrix}
 8 \\ 32\\16\\32\\32\\8\\32\\32\\16
\end{pmatrix}
\]

If $k=3$ then 
\[x(3+1)=A\otimes x(3)\]
\[x(4)= \begin{pmatrix}
2 \times 32\\ 4\times 32\\2 \times 32\\4 \times 32\\4 \times 32\\4 \times 32\\4 \times 32\\4 \times 32\\4 \times 32
\end{pmatrix}
\]

If $k=4$ then 
\[x(4+1)=A\otimes x(4)\]
\[x(5)=  \begin{pmatrix}
8 \times 32\\ 16\times 32\\8 \times 32\\16 \times 32\\16 \times 32\\16 \times 32\\16 \times 32\\16 \times 32\\16 \times 32
\end{pmatrix}
.\]
Therefore $x(5)=4\otimes x(4)$ which implies in power algorithm $c=4, p=5$ and $q=4$. Thus
\[
\mu(A)=\frac{c}{p-q}=\frac{4}{5-4}
\]
and the max-eigenvector is
\[
x=\bigoplus_{j=1}^{5-4}(\mu(A))^{5-4-j}\otimes x(4+j-1)=x(4).
\]
\section{}\label{secA2}

The matrix $\Delta(A^{'})$ from Example \ref{big} equals
\[
\Delta(A^{'})=I \oplus A \oplus \cdots \oplus A^{14} _{\otimes}=\]\[\begin{pmatrix}
 1 & 1 & 1 & \frac{1}{4} & 1 & \frac{1}{4} & \frac{1}{4} & \frac{1}{4} & \frac{1}{4} & \frac{1}{4} & \frac{1}{4} & \frac{1}{2} & \frac{1}{2} & \frac{1}{4} & \frac{1}{2} \\ \\
 1 & 1 & 1 & \frac{1}{4} & 1 & \frac{1}{4} & \frac{1}{4} & \frac{1}{4} & \frac{1}{4} & \frac{1}{4} & \frac{1}{4} & \frac{1}{2} & \frac{1}{2} & \frac{1}{4} & \frac{1}{2} \\ \\
 1 & 1 & 1 & \frac{1}{4} & 1 & \frac{1}{4} & \frac{1}{4} & \frac{1}{4} & \frac{1}{4} & \frac{1}{4} & \frac{1}{4} & \frac{1}{2} & \frac{1}{2} & \frac{1}{4} & \frac{1}{2} \\ \\
 \frac{1}{4} & \frac{1}{4} & \frac{1}{4} & 1 & \frac{1}{4} & \frac{1}{4} & \frac{1}{4} & \frac{1}{4} & \frac{1}{4} & \frac{1}{4} & \frac{1}{4} & \frac{3}{16} & \frac{3}{16} & \frac{1}{4} & \frac{3}{16} \\ \\
 1 & 1 & 1 & \frac{1}{4} & 1 & \frac{1}{4} & \frac{1}{4} & \frac{1}{4} & \frac{1}{4} & \frac{1}{4} & \frac{1}{4} & \frac{1}{2} & \frac{1}{2} & \frac{1}{4} & \frac{1}{2} \\ \\
 \frac{1}{2} & \frac{1}{2} & \frac{1}{2} & \frac{1}{4} & \frac{1}{2} & 1 & 1 & 1 & 1 & 1 & 1 & \frac{3}{4} & \frac{3}{4} & \frac{1}{4} & \frac{3}{4} \\ \\
 \frac{1}{2} & \frac{1}{2} & \frac{1}{2} & \frac{1}{4} & \frac{1}{2} & 1 & 1 & 1 & 1 & 1 & 1 & \frac{3}{4} & \frac{3}{4} & \frac{1}{4} & \frac{3}{4} \\ \\
 \frac{1}{2} & \frac{1}{2} & \frac{1}{2} & \frac{1}{4} & \frac{1}{2} & 1 & 1 & 1 & 1 & 1 & 1 & \frac{3}{4} & \frac{3}{4} & \frac{1}{4} & \frac{3}{4} \\ \\
 \frac{1}{2} & \frac{1}{2} & \frac{1}{2} & \frac{1}{4} & \frac{1}{2} & 1 & 1 & 1 & 1 & 1 & 1 & \frac{3}{4} & \frac{3}{4} & \frac{1}{4} & \frac{3}{4} \\ \\
 \frac{1}{2} & \frac{1}{2} & \frac{1}{2} & \frac{1}{4} & \frac{1}{2} & 1 & 1 & 1 & 1 & 1 & 1 & \frac{3}{4} & \frac{3}{4} & \frac{1}{4} & \frac{3}{4} \\ \\
 \frac{1}{2} & \frac{1}{2} & \frac{1}{2} & \frac{1}{4} & \frac{1}{2} & 1 & 1 & 1 & 1 & 1 & 1 & \frac{3}{4} & \frac{3}{4} & \frac{1}{4} & \frac{3}{4} \\ \\
 \frac{1}{2} & \frac{1}{2} & \frac{1}{2} & \frac{1}{4} & \frac{1}{2} & \frac{1}{4} & \frac{1}{4} & \frac{1}{4} & \frac{1}{4} & \frac{1}{4} & \frac{1}{4} & 1 & 1 & \frac{1}{4} & 1 \\ \\
 \frac{1}{2} & \frac{1}{2} & \frac{1}{2} & \frac{1}{4} & \frac{1}{2} & \frac{1}{4} & \frac{1}{4} & \frac{1}{4} & \frac{1}{4} & \frac{1}{4} & \frac{1}{4} & 1 & 1 & \frac{1}{4} & 1 \\ \\
 \frac{1}{4} & \frac{1}{4} & \frac{1}{4} & \frac{1}{4} & \frac{1}{4} & \frac{1}{8} & \frac{1}{8} & \frac{1}{8} & \frac{1}{8} & \frac{1}{8} & \frac{1}{8} & \frac{1}{2} & \frac{1}{2} & 1 & \frac{1}{2} \\ \\
 \frac{1}{2} & \frac{1}{2} & \frac{1}{2} & \frac{1}{4} & \frac{1}{2} & \frac{1}{4} & \frac{1}{4} & \frac{1}{4} & \frac{1}{4} & \frac{1}{4} & \frac{1}{4} & 1 & 1 & \frac{1}{4} & 1 \\
\end{pmatrix}.
\]

\end{appendices}

\end{document}